\theoremstyle{plain} %text of this environment is typesetted in italics
\newtheorem{theorem}{\indent\sc Theorem}[section]
\newtheorem{lemma}[theorem]{\indent\sc Lemma}
\newtheorem{corollary}[theorem]{\indent\sc Corollary}
\newtheorem{proposition}[theorem]{\indent\sc Proposition}
\theoremstyle{definition} %text of this environment is typesetted in roman letters
\newtheorem{remark}[theorem]{\indent\sc Remark}
\newtheorem{example}[theorem]{\indent\sc Example}
\newtheorem{notation}[theorem]{\indent\sc Notation}
\numberwithin{equation}{section}
\def\Z{\zeta}
\def\V{\varphi}
\def\O{\omega}
\def\G{\Gamma}
\def\Q{\mathbb{Q}}
\def\o{\mathcal{O}}
\def\F{\mathfrak{f}}
\def\address#1#2{\begingroup
\noindent\parbox[t]{7.8cm}{%
\small{\scshape\ignorespaces#1}\par\vskip1ex
\noindent\small{\itshape E-mail address}%
\/: #2\par\vskip4ex}\hfill%
\endgroup}%
\title{{Construction of class fields over imaginary biquadratic fields}} %title of the paper
\author{
\textsc{Ja Kyung Koo and Dong Sung Yoon$^*$} %names of authors
}
\date{} %leave empty
\begin{document}

\maketitle

%%%%%%%%%%%%%%% footnote %%%%%%%%%%%%%%%%
\footnote{ %2010 MSC numbers
2010 \textit{Mathematics Subject Classification}. 11G16, 11R37 (primary), 11F46, 11G15  (secondary). }
\footnote{ %key words and phrases
\textit{Key words and phrases}. class field theory, complex multiplication, Shimura's reciprocity law, modular units} 
\footnote{$^*$Corresponding author.}

%%%%%%%%%%%%%%%%%%%%%%%%%%%%%%%%%%%%%%%%%$$$$$$$$$$$$$$$$$$$$$$$$$$$$$$$$$$$$$$$$$$$$$$$$$$$$$$$$$$$$$$$$$$$$$$$$$$$$$$$$$$$$

\begin{abstract}
Let $K$ be an imaginary biquadratic field and $K_1$, $K_2$ be its imaginary quadratic subfields.
For integers $N>0$, $\mu\geq 0$ and an odd prime $p$ with $\gcd(N,p)=1$, let $K_{(Np^\mu)}$ and $(K_i)_{(Np^\mu)}$ for $i=1,2$ be the ray class fields of $K$ and $K_i$, respectively, modulo $Np^\mu$.
We first present certain class fields $\widetilde{K_{N,p,\mu}^{1,2}}$ of $K$, in the sense of Hilbert, which are generated by Siegel-Ramachandra invariants of $(K_i)_{(Np^{\mu+1})}$ for $i=1,2$ over $K_{(Np^\mu)}$ and show that 
$K_{(Np^{\mu+1})}=\widetilde{K_{N,p,\mu}^{1,2}}$ for almost all $\mu$.

\end{abstract}

\maketitle

\section{Introduction}

In 1900 Hilbert asked at the Paris ICM, as his 12-th problem, that what kind of analytic functions and algebraic numbers are necessary to construct all abelian extensions of given number fields.
For any number field $K$ and a modulus $\mathfrak{m}$ of $K$, it is well known that there is a unique maximal abelian extension of $K$ unramified outside $\mathfrak{m}$ with certain ramification condition (\cite{Takagi} or \cite[Theorem 8.6]{Cox}), which we call the \textit{ray class field} of $K$ modulo $\mathfrak{m}$.
Since any abelian extension of $K$ is contained in some ray class field modulo $\mathfrak{m}$ by class field theory, in order to approach the problem we are first in need of constructing ray class fields of given number fields as Ramachandra did in \cite{Ramachandra} over imaginary quadratic fields.
\par
Historically, over imaginary quadratic fields, after Hasse (\cite{Hasse}) one can now construct it by using the 
theory of complex multiplication of elliptic curves together with the singular values of modular functions and elliptic functions (\cite[Chapter 10]{Lang} or \cite{Shimura-5}). 
However, for any other number fields we know relatively little until now.
For example, over cyclotomic field see \cite{Komatsu}, \cite{K-Y}, \cite{Ribet} (also \cite{Mazur}), \cite{Shimura-1}, and   over biquadratic fields we refer to \cite{Azizi}, \cite{Bae}, \cite{Benjamin}, \cite{Buell}, \cite{Sime} and \cite{Yokoi}.
\par
In this paper we will concentrate on the case of imaginary biquadratic fields $K$.
There are two imaginary quadratic subfields $K_1$, $K_2$ in $K$.
For integers $N>0$, $\mu\geq 0$ and an odd prime $p$ with $\gcd(N,p)=1$, we denote by $K_{(Np^\mu)}$ and $(K_i)_{(Np^\mu)}$ for $i=1,2$ the ray class fields of $K$ and $K_i$ modulo $Np^\mu$, respectively.
In Section \ref{Shimura's reciprocity law on imaginary biquadratic fields}, we apply Shimura's reciprocity law to the field $K$ and put emphasis on the necessity of using modular units to construct class fields over $K$ as Hilbert proposed.
On the other hand, Jung-Koo-Shin (\cite[Theorem 3.5]{Jung}) constructed relatively simple ray class invariants over imaginary quadratic fields by means of the singular values of certain Siegel functions, namely, Siegel-Ramachandra invariants.
Using this idea we define in Section \ref{(I)} and \ref{(II)} certain class field $\widetilde{K_{N,p,\mu}^{1,2}}$ of $K$ which is generated by Siegel-Ramachandra invariants of $(K_i)_{(Np^{\mu+1})}$ for $i=1,2$ over $K_{(Np^\mu)}$, and provide a necessary and sufficient condition for the field $\widetilde{K^{1,2}_{N,p,\mu}}$ to become $K_{(Np^{\mu+1})}$ (Theorem \ref{K_{1,2}} and \ref{K_{1,2}-2}).

\begin{notation}
For $z\in\mathbb{C}$, we denote by $\overline{z}$ the complex conjugate of $z$ and put $e(z)=e^{2\pi i z}$.
If $R$ is a ring with identity and $r,s\in\mathbb{Z}_{>0}$, $M_{r\times s}(R)$ indicates the ring of all $r\times s$ matrices with entries in $R$. 
In particular, we set $M_{r}(R)=M_{r\times r}(R)$.
The identity matrix of $M_{r}(R)$ is written by $1_r$ and the transpose of a matrix $\alpha$ is written as ${^t}\alpha$. 
And, $R^\times$ means the group of all invertible elements of $R$.
If $G$ is a group and $g_1,g_2,\ldots,g_r$ are elements of $G$, let $\langle g_1,g_2,\ldots,g_r \rangle$ be the subgroup of $G$ generated by $g_1,g_2,\ldots,g_r$.
For a number field $K$, let $\o_K$ be the ring of integers of $K$.
If $a\in\o_K$ we denote by $(a)$ the principal ideal of $K$ generated by $a$.
When $\mathfrak{f}$ is an integral ideal of $K$, we mean by $\mathcal{N}(\mathfrak{f})$ the absolute norm of an ideal $\mathfrak{f}$.
For a finite extension $L$ of $K$, let $[L:K]$ be the extension degree of $L$ over $K$.
When $L/K$ is abelian, we mean by $\big(\frac{L/K}{\cdot}\big)$ the Artin map of $L/K$.
Further, we let $\Z_N=e^{2\pi i/N}$ be a primitive $N$-th root of unity for a positive integer $N$.
\end{notation}

\section{Shimura's reciprocity law on imaginary biquadratic fields}\label{Shimura's reciprocity law on imaginary biquadratic fields}
In this section we shall briefly recall Siegel modular forms and apply Shimura's reciprocity law to an imaginary biquadratic field.
\par
Let $n$ be a positive integer and 
\begin{equation*}
J=J_n=
\left[\begin{matrix}
0&-1_n\\
1_n&0
\end{matrix}\right].
\end{equation*}
For a commutative ring $R$ with unity, we let
\begin{equation}\label{symplectic}
\begin{array}{rcl}
\mathrm{GSp}_{2n}(R)&=&\big\{\alpha\in \mathrm{GL}_{2n}(R)~|~{^t}\alpha J\alpha=\nu(\alpha)J ~\textrm{ with $\nu(\alpha)\in R^\times$}\big\},\\
\mathrm{Sp}_{2n}(R)&=&\big\{\alpha\in \mathrm{GSp}_{2n}(R)~|~\nu(\alpha)=1 \big\}.
\end{array}
\end{equation}
We set $G_{\Q}=\mathrm{GSp}_{2n}(\mathbb{Q})$ and $G_{\Q+}=\{\alpha\in G_\Q~|~\nu(\alpha)>0 \}$.
Let 
\begin{equation*}
\mathbb{H}_n=\{Z\in M_n(\mathbb{C})~|~{^t}Z=Z,~\mathrm{Im}(Z)>0\}
\end{equation*}
be the \textit{Siegel upper half-space of degree $n$}.
Here, for a hermitian matrix $\xi$ we write $\xi>0$ when $\xi$ is positive definite.
An element $\alpha=
\left[\begin{matrix}
A&B\\
C&D
\end{matrix}\right]$
of $G_{\Q+}$ acts on $\mathbb{H}_n$ by
\begin{equation*}
\alpha(Z)=(AZ+B)(CZ+D)^{-1},
\end{equation*}
where $A,B,C,D\in M_n(\mathbb{Q})$.
\par
For every positive integer $N$, let 
\begin{equation*}
\G(N)=\big\{\gamma \in \mathrm{Sp}_{2n}(\mathbb{Z})~|~\gamma\equiv 1_{2n}\pmod{N\cdot M_{2n}(\mathbb{Z})} \big\}.
\end{equation*} 
For an integer $k$ a holomorphic function 
$f:\mathbb{H}_n\rightarrow\mathbb{C}$ is called a \textit{(classical) Siegel modular form of degree $n$, weight $k$ and level $N$} if
\begin{eqnarray*}
f(\gamma (Z))=\det(CZ+D)^k f(Z) 
\end{eqnarray*}
for all $\gamma=\left[\begin{matrix}A&B\\C&D\end{matrix}\right] \in \G (N)$ and $Z\in \mathbb{H}_n$, plus the requirement  that $f$ is holomorphic at every cusp when $n=1$. 
A Siegel modular form $f$ of degree $n$ and level $N$ has a Fourier expansion of the form
\begin{equation*}
f(Z)=\sum_{\xi}A(\xi)e(\mathrm{tr}(\xi Z)/N)
\end{equation*}
with $A(\xi)\in \mathbb{C}$, where $\xi$ runs over all positive semi-definite half-integral symmetric matrices of degree $n$ \cite[\S4 Theorem 1]{Klingen}. 
Here, a symmetric matrix $\xi \in \mathrm{GL}_n(\Q)$ is called \textit{half-integral} if $2\xi$ is an integral matrix 
and its diagonal entries are even.
\par
For a subfield $D$ of $\mathbb{C}$, let
\begin{equation*}
\begin{array}{rcl}
\mathcal{M}_k^n(\Gamma(N),D)&=&\textrm{the space of Siegel modular forms of weight $k$ and level $N$}\\
&&\textrm{with Fourier coefficients in $D$},\vspace{0.1cm}\\
\mathcal{M}_k^n(D)&=&\displaystyle\bigcup_{N=1}^\infty\mathcal{M}_k^n(\Gamma(N),D),\vspace{0.1cm}\\
\mathcal{A}_0^n(\Gamma(N),D)&=&\textrm{the field of functions of the form $g/h$}\\
&&\textrm{with $g\in\mathcal{M}_k^n(D)$ and $h\in\mathcal{M}_k^n(D)\setminus\{0\}$ for the same weight $k$,}\\
&&\textrm{which are invariant under $\Gamma(N)$}.
\end{array}
\end{equation*}
In particular, we set 
\begin{eqnarray*}
\mathcal{F}_N^n&=&\mathcal{A}_{0}^n\big(\G(N),\mathbb{Q}(\Z_N)\big),\\
\mathcal{F}^n&=&\bigcup_{N=1}^\infty \mathcal{F}_N^n.
\end{eqnarray*}

\par
Let $G_{\mathbb{A}}$ be the adelization of the group $G_{\mathbb{Q}}$, $G_0$ be the non-archimedean part of $G_\mathbb{A}$ and $G_\infty$ be the archimedean part of $G_\mathbb{A}$. 
Then $\nu$ in (\ref{symplectic}) defines a homomorphism $G_{\mathbb{A}}\rightarrow \mathbb{Q}_\mathbb{A}^\times$.
We put $G_{\infty+}=\{x\in G_\infty~|~\nu(x)> 0\}$ and $G_{\mathbb{A}+}=G_0G_{\infty+}$. 
For every algebraic number field $F$, let $F_\mathrm{ab}$ be the maximal abelian extension of $F$ and $F_\mathbb{A}^\times$ be the idele group of $F$. 
By class field theory, every element $x$ of $F_\mathbb{A}^\times$ acts on the field $F_\mathrm{ab}$ as an automorphism. 
We then write this automorphism as $[x,F]$. 
On the other hand, any element of $G_{\mathbb{A}+}$ acts on the space $\mathcal{A}_0^n(\mathbb{Q}_\mathrm{ab})$ as an automorphism (\cite[p.680]{Shimura-4}). 
If $x\in G_{\mathbb{A}+}$ and $f\in \mathcal{A}_0^n(\mathbb{Q}_\mathrm{ab})$, by $f^x$ we mean  the image of $f$ under $x$.
\par

For each index $i=1,2$, let $K_i=\mathbb{Q}(\sqrt{-d_i})$ be an imaginary quadratic field with square-free positive integer $d_i$ and set
\begin{displaymath}
\rho_i = \left\{ \begin{array}{ll}
\displaystyle-\frac{1}{\sqrt{-d_i}}~ & \textrm{if $-d_i\equiv 1\pmod{4}$}\vspace{0.2cm}\\
\displaystyle-\frac{1}{2\sqrt{-d_i}}~ & \textrm{if $-d_i\equiv 2,3\pmod{4}$}.
\end{array} \right.
\end{displaymath}
Note that $\rho_i$ is the number in $K_i$ for which 
$-\rho_i^2$ is totally positive, Im$(\rho_i)>0$ and Tr$_{K_i/\mathbb{Q}}(\rho_i x)\in\mathbb{Z}$ for all $x\in\mathcal{O}_{K_i}$.
Let $L_i=\mathcal{O}_{K_i}$ be a lattice in $\mathbb{C}$.
Then, for $z_i,w_i \in\mathbb{C}$ we define an $\mathbb{R}$-bilinear form $E_i(z_i,w_i)$ on $\mathbb{C}$ by
\begin{equation*}
E_i(z_i,w_i)= \rho_i(z_i\overline{w_i}-\overline{z_i}w_i).
\end{equation*}
And, $E_i$ becomes a non-degenerate Riemann form on the complex torus $\mathbb{C}/L_i$ satisfying
\begin{equation*}
E_i(\alpha_i,\beta_i)=\mathrm{Tr}_{K_i/\mathbb{Q}}(\rho_i\alpha_i\overline{\beta_i})
 \textrm{ ~~for $\alpha_i,\beta_i \in K_i$},
\end{equation*}
which makes it an elliptic curve as a polarized abelian variety (\cite[p.43--44]{Shimura-6}).
Let
\begin{equation*}
\theta_i = \left\{ \begin{array}{ll}
({-1+\sqrt{-d_i}})/{2}~ & \textrm{if $-d_i\equiv 1\pmod{4}$}\vspace{0.25cm}\\
\sqrt{-d_i}~ & \textrm{if $-d_i\equiv 2,3\pmod{4}$}
\end{array} \right.
\end{equation*}
and let $\Omega_i=\big[\theta_i~~1\big]\in M_{1\times 2}(\mathbb{C})$.
Then $\o_{K_i}=\mathbb{Z}[\theta_i]$ and $\Omega_i$ satisfies
\begin{eqnarray*}
L_i&=&\left\{\Omega_i\left[\begin{matrix} a\\b \end{matrix}\right]~\Big|~a,b\in\mathbb{Z} \right\},\\
E_i(\Omega_i \mathbf{x}, \Omega_i \mathbf{y})&=&{^t}\mathbf{x}J_1\mathbf{y} ~~~\textrm{for $\mathbf{x},\mathbf{y}\in\mathbb{R}^{2}$}.
\end{eqnarray*} 
On the other hand, we define a ring monomorphism $h_i:K_i\rightarrow M_{2}(\mathbb{Q})$ by
\begin{equation*}
\left[\begin{matrix}
\alpha_i\theta_i\\
\alpha_i
\end{matrix}\right]
=h_i(\alpha_i)\left[\begin{matrix}
\theta_i\\
1
\end{matrix}\right]~~\textrm{for $\alpha_i\in K_i$},
\end{equation*}
in other words, $h_i(\alpha_i)$ is the regular representation of $\alpha_i$ with respect to $\{\theta_i,1\}$.
Then $\theta_i$ becomes the CM-point of $\mathbb{H}_1$ induced from $h_i$ which corresponds to the 
elliptic curve $(\mathbb{C}/L_i, E_i)$ (\cite[p.684-685]{Shimura-4} or \cite[\S 24.10]{Shimura-6}).
\par
Now, let $Y=K_1\oplus K_2$ be a CM-algebra so that $[Y:\mathbb{Q}]=4$, $\mathcal{O}_Y=\mathcal{O}_{K_1}\oplus \mathcal{O}_{K_2}$ and $\rho=(\rho_1,\rho_2)\in Y$.
We denote by 
\begin{equation*}
v(\alpha)=\begin{bmatrix}\alpha_1\\ \alpha_2 \end{bmatrix}\quad\textrm{for $\alpha=(\alpha_1,\alpha_2)\in Y$}.
\end{equation*}
Let $L=\big\{v(\alpha)~\big|~\alpha\in\mathcal{O}_Y\big\}$ be a lattice in $\mathbb{C}^2$.
For $\mathbf{z}={^t}\begin{bmatrix}z_1& z_2\end{bmatrix}$ and $\mathbf{w}={^t}\begin{bmatrix}w_1& w_2\end{bmatrix}$ in $\mathbb{C}^2$, we define an $\mathbb{R}$-bilinear form $E(\mathbf{z},\mathbf{w})$ on $\mathbb{C}^2$ by
\begin{equation*}
E(\mathbf{z},\mathbf{w})=\sum_{i=1}^{2} E_i(z_i,w_i).
\end{equation*}
Then $E$ becomes a non-degenerate Riemann form on the complex torus $\mathbb{C}^2/L$ satisfying
\begin{equation*}
E\big(v(\alpha),v(\beta)\big)=\mathrm{Tr}_{Y/\mathbb{Q}}(\rho\alpha\overline{\beta})
=\sum_{i=1}^{2}\mathrm{Tr}_{K_i/\mathbb{Q}}(\rho_i\alpha_i\overline{\beta_i})
 \textrm{ ~~for $\alpha=(\alpha_i),\beta=(\beta_i) \in Y$},
\end{equation*}
which also makes it a polarized abelian variety (\cite[p.43--44, 129--130]{Shimura-6}).
Let 
\begin{equation*}
\Omega=
\begin{bmatrix}
\theta_1 &0 & 1&0 \\
0&\theta_2&0&1
\end{bmatrix}\in M_{2\times 4}(\mathbb{C}).
\end{equation*}
Then $\Omega$ satisfies
\begin{eqnarray*}
L&=&\left\{\Omega\left[\begin{matrix} \mathbf{a}\\\mathbf{b} \end{matrix}\right]~\Big|~\mathbf{a},\mathbf{b}\in\mathbb{Z}^2 \right\},\\
E(\Omega \mathbf{x}, \Omega \mathbf{y})&=&{^t}\mathbf{x}J_2\mathbf{y} ~~~\textrm{for $\mathbf{x},\mathbf{y}\in\mathbb{R}^{4}$}.
\end{eqnarray*} 
Here, we write $\Omega=\begin{bmatrix}\Omega_1&\Omega_2\end{bmatrix}=\begin{bmatrix}v(e_1)&v(e_2)&v(e_3)&v(e_{4})\end{bmatrix}$ with $\Omega_1,\Omega_2\in M_2(\mathbb{C})$ and $e_1, e_2, e_3, e_{4}\in Y$. 
Then $\{e_1,e_2,e_3,e_{4} \}$ is a free $\mathbb{Q}$-basis of $Y$, so we can define a ring monomorphism $h:Y\rightarrow M_{4}(\mathbb{Q})$ by
\begin{equation*}
\begin{bmatrix}
\alpha e_1\\
\vdots\\
\alpha e_{4}
\end{bmatrix}
=h(\alpha)\begin{bmatrix}
e_1\\
\vdots\\
e_{4}
\end{bmatrix}\quad\textrm{for $\alpha\in Y$},
\end{equation*}
that is, $h(\alpha)$ is the regular representation of $\alpha$ with respect to $\{e_1,e_2,e_3,e_{4}\}$.
One can then readily show that $Z_0=\Omega_2^{-1}\Omega_1=
\begin{bmatrix}
\theta_1  & 0\\
0&\theta_2
\end{bmatrix}
$ is the CM-point of $\mathbb{H}_2$ induced from $h$ which corresponds to the polarized abelian variety $(\mathbb{C}^2/L, E)$
(\cite[p.684-685]{Shimura-4} or \cite[\S 24.10]{Shimura-6}).
\par
Let $K=\mathbb{Q}(\sqrt{-d_1},\sqrt{-d_2})$ be an imaginary biquadratic field which is the composite field of $K_1$ and $K_2$.
Let $Y_{\mathbb{A}}=\prod_{i=1}^2(K_i)_{\mathbb{A}}$ and $Y_{\mathbb{A}}^\times=\prod_{i=1}^2(K_i)_{\mathbb{A}}^\times$.
We define a map $\varphi:K_\mathbb{A}^\times\rightarrow Y_\mathbb{A}^\times$ by
\begin{equation*}
\varphi(x)=\big(N_{K/K_i}(x) \big)_{1\leq i\leq 2} ~~\textrm{for $x\in K_\mathbb{A}^\times$}.
\end{equation*}
Then the map $h$ can be naturally extended to a homomorphism $Y_{\mathbb{A}}\rightarrow M_{4}(\mathbb{Q}_\mathbb{A})$, which we also denote by $h$.
Hence, for every $b\in K_{\mathbb{A}}^\times$ we get $\nu\big(h(\V(b))\big)=N_{K/\mathbb{Q}}(b)$ and $h\big(\V(b)^{-1}\big)\in G_{\mathbb{A}+}$ (\cite[p.172]{Shimura-6}).

\begin{proposition}[Shimura's reciprocity law]\label{reciprocity}
Let $Y$, $h$, $Z_0$ and $K$ be as above. Then for every $f\in\mathcal{A}_0^2(\mathbb{Q}_\mathrm{ab})$ which is finite at $Z_0$, the value $f(Z_0)$ belongs to $K_\mathrm{ab}$. Moreover, if $b\in K_\mathbb{A}^\times$, then $f^{h(\V(b)^{-1})}$ is finite at $Z_0$ and
\begin{equation*}
f(Z_0)^{[b,K]}=f^{h(\V(b)^{-1})}(Z_0).
\end{equation*}
\end{proposition}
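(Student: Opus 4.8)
The plan is to read this statement as the specialization, to the present CM-algebra, of Shimura's main theorem of complex multiplication for Siegel modular functions at CM points (\cite[\S 26]{Shimura-6}; compare \cite[p.~684--685]{Shimura-4}). First I would record that, with the regular representation $h$ realizing $Y\hookrightarrow M_{2n}(\mathbb{Q})$, the polarized complex torus $(\mathbb{C}^n/L,E)$ is an abelian variety with complex multiplication by the CM-algebra $Y$ equipped with the CM-type $\Phi$ singled out by the conditions $\mathrm{Im}(\theta_i)>0$; its associated CM-point in $\mathbb{H}_n$ is exactly $z_0=\Omega_2^{-1}\Omega_1=\mathrm{diag}(\theta_1,\ldots,\theta_n)$, as already verified above. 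The normalization is the correct one because the $\rho_i$ were chosen so that $-\rho_i^2$ is totally positive, which is precisely what makes $E$ a Riemann form, and because the chosen $\mathbb{Q}$-basis $\{e_1,\ldots,e_{2n}\}$ of $Y$ (the columns of $\Omega$) satisfies $E(\Omega x,\Omega y)={^t}xJ_ny$, so that $h$ takes values in $GSp_{2n}$ relative to $J_n$.

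Next I would identify the reflex field of $(Y,\Phi)$ with the composite $K=K_1\cdots K_n$. An automorphism $\sigma$ of $\mathbb{C}$ stabilizes $\Phi$, regarded as a set of $\mathbb{Q}$-embeddings $Y\hookrightarrow\mathbb{C}$, if and only if $\sigma$ fixes each $K_i$ pointwise, and that happens exactly when $\sigma$ fixes the composite $K$ pointwise; hence the fixed field of the stabilizer of $\Phi$ is $K$. Under this identification the reflex norm $K_\mathbb{A}^\times\to Y_\mathbb{A}^\times$ is precisely the map $\V$, and the automorphism of $\mathfrak{A}_0^n(\mathbb{Q}_{ab})$ that the general theorem attaches to $b$ is the action of $h(\V(b)^{-1})$; this element indeed lies in $G_{\mathbb{A}+}$ because $\nu\bigl(h(\V(b))\bigr)=N_{K/\mathbb{Q}}(b)>0$ (\cite[p.~172]{Shimura-6}).

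With these identifications in place, Shimura's theorem yields directly that $f(z_0)\in K_{ab}$ for every $f\in\mathfrak{A}_0^n(\mathbb{Q}_{ab})$ finite at $z_0$, that $f^{h(\V(b)^{-1})}$ is again finite at $z_0$, and that $f(z_0)^{[b,K]}=f^{h(\V(b)^{-1})}(z_0)$ for all $b\in K_\mathbb{A}^\times$. As a consistency check I would use that $z_0$ is diagonal: then $\mathbb{C}^n/L$ is the product $\prod_{i=1}^n\mathbb{C}/L_i$ of the CM elliptic curves $(\mathbb{C}/L_i,E_i)$ introduced above and, after the evident permutation of the symplectic basis, $h$ becomes the block sum of the $h_i$, so that the $G_{\mathbb{A}+}$-action at $z_0$ decouples into the $GL_2(\mathbb{Q}_\mathbb{A})$-actions at the $\theta_i$; since $[b,K]$ restricts on $(K_i)_{ab}$ to $[N_{K/K_i}(b),K_i]=[\V(b)_i,K_i]$ by norm-functoriality of class field theory, the asserted formula reduces, factor by factor, to the classical Shimura reciprocity law over the imaginary quadratic field $K_i$ (\cite[Chapter~6]{Shimura-5}, \cite{Lang}).

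The step I expect to be the real work is the one in the second paragraph: pinning down that $\V$ is exactly the reflex norm and that the particular element $h(\V(b)^{-1})\in G_{\mathbb{A}+}$, built from the specific basis $\{e_j\}$ and the specific Riemann form $E$, is the very automorphism occurring in the abstract statement of \cite[\S 26]{Shimura-6} — that is, making the normalizations (signs, the role of $\rho_i$, the ordering conventions) line up. Once that dictionary is fixed, the conclusion is an immediate appeal to the cited theorem with no further computation.
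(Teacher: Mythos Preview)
Your proposal is correct and is, at bottom, the same approach the paper takes: both appeal directly to \cite[Theorem~26.8]{Shimura-6}. The paper's proof is literally the one-line citation, whereas you spell out the dictionary (reflex field $=K$, reflex norm $=\V$, the element $h(\V(b)^{-1})\in G_{\mathbb{A}+}$) that makes the citation applicable; your added explanation is sound but not required by the paper.
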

\begin{proof}
\cite[Theorem 26.8]{Shimura-6}.
\end{proof}

\begin{remark}\label{class field}
For any $f\in\mathcal{A}_0^2(\mathbb{Q}_\mathrm{ab})$ that is finite at $Z_0$, the value $f(Z_0)$ indeed belongs to the class field $\widetilde{K_\mathrm{ab}}$ of $K$ corresponding to the kernel of $\varphi$.

\end{remark}

Now, we define a subset $\mathbb{H}_2^{\mathrm{diag}}$ of the Siegel upper half-space $\mathbb{H}_2$ by
\begin{equation*}
\mathbb{H}_2^{\mathrm{diag}}=\left\{ \begin{bmatrix}  z_1&0\\ 0&z_2
\end{bmatrix}
 ~|~ z_1,  z_2\in\mathbb{H}_1  \right\}.
\end{equation*}
Clearly, the CM-point $Z_0$ belongs to $\mathbb{H}_2^{\mathrm{diag}}$.
A function $f$ in $\mathcal{F}_N^1$ is called a \textit{modular unit of level $N$} if it has no zeros and poles on $\mathbb{H}_1$ \cite[p.36]{Kubert}.

\begin{proposition}\label{modular unit}
Let $N\geq 2$ be an integer and $f\in\mathcal{F}_N^2$.
Then $f$ has no zeros and poles on $\mathbb{H}_2^{\mathrm{diag}}$ if and only if there exist modular units $f_1, f_2$ of level $N$  such that
\begin{equation*}
f\left( \begin{bmatrix}  z_1&0\\ 0&z_2
\end{bmatrix}\right)= f_1(z_1)f_2(z_2).
\end{equation*}
\end{proposition}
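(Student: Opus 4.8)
The ``if'' part is immediate: when $f(\mathrm{diag}(z_1,\ldots,z_n))=\prod_{i=1}^{n}f_i(z_i)$ with each $f_i\in\mathfrak{F}_N^1$ having no zeros and poles on $\mathbb{H}_1$, the finite product on the right is holomorphic and nowhere vanishing at every point of $\mathbb{H}_n^{\mathrm{diag}}$. So the content lies in the converse, which I would attack as follows. Assume $f\in\mathfrak{F}_N^n$ has no zeros and poles on $\mathbb{H}_n^{\mathrm{diag}}$, let $\iota\colon\mathbb{H}_1^n\to\mathbb{H}_n^{\mathrm{diag}}$ be the isomorphism $(z_1,\ldots,z_n)\mapsto\mathrm{diag}(z_1,\ldots,z_n)$, and put $g=f\circ\iota$. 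First I would record the structural features of $g$. It is holomorphic and nowhere vanishing on $\mathbb{H}_1^n$ by hypothesis. The block embedding $SL_2(\mathbb{Z})^n\hookrightarrow Sp_{2n}(\mathbb{Z})$ that places the entries of $\gamma_i$ into the $(i,i)$-slots of the four $n\times n$ blocks is $\iota$-equivariant for the component-wise action of $SL_2(\mathbb{Z})^n$ on $\mathbb{H}_1^n$ and carries $\G(N)^n$ into $\G(N)$; hence $g$ is invariant under the component-wise action of $\G(N)^n$. Writing $f=P/Q$ with $P,Q$ Siegel modular forms of degree $n$ and equal weight whose Fourier coefficients lie in $\mathbb{Q}(\Z_N)$, and restricting their expansions $\sum_\xi A(\xi)e(\mathrm{tr}(\xi z)/N)$ to $z=\mathrm{diag}(z_1,\ldots,z_n)$ (so that $\mathrm{tr}(\xi z)=\sum_i\xi_{ii}z_i$ with $\xi_{ii}\in\mathbb{Z}_{\geq0}$) exhibits $g$ as a quotient of $q$-power series in $e(z_1/N),\ldots,e(z_n/N)$ with coefficients in $\mathbb{Q}(\Z_N)$. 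Consequently, for each index $i$ and for $(z_k)_{k\neq i}$ outside a proper analytic subset, $z_i\mapsto g$ is a level-$N$ modular function whose Laurent expansion in $e(z_i/N)$ at the cusp $\infty$ has coefficients varying holomorphically in the remaining variables; and since $\mathfrak{F}_N^n$ is stable under $Sp_{2n}(\mathbb{Z})$, the same is true at every cusp in every variable.

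The heart of the argument is then the following claim, proved by induction on $m$: any $g\colon\mathbb{H}_1^m\to\mathbb{C}^\times$ that is holomorphic, invariant under the component-wise action of $\G(N)^m$, and such that for each $i$ and for generic $(z_k)_{k\neq i}$ the function $z_i\mapsto g$ is a level-$N$ modular function, splits as $g=\mu\prod_{i=1}^{m}v_i(z_i)$ with $\mu\in\mathbb{C}^\times$ and each $v_i$ a level-$N$ modular unit over $\mathbb{C}$---that is, a $\G(N)$-invariant function, meromorphic on $\mathbb{H}_1$ and at the cusps, with no zeros or poles on $\mathbb{H}_1$. (This is the analytic incarnation of the fact that, modulo constants, a unit on a product of irreducible varieties is a product of units on the factors; but I would prove it directly.) For the inductive step, pick $(z_2^0,\ldots,z_m^0)$ in the complement $V$ of the finitely many proper analytic loci on which some leading Fourier coefficient of $g$---in the variable $z_1$, at some cusp of the first variable---vanishes; then $V$ is dense and connected, $v_1:=g(\,\cdot\,;z_2^0,\ldots,z_m^0)$ is a level-$N$ modular unit over $\mathbb{C}$, and for every $(z_2,\ldots,z_m)\in V$ the function $z_1\mapsto g(z_1;z_2,\ldots,z_m)$ has, at each of the finitely many cusps, order equal to the generic one, so that its divisor equals $\mathrm{div}(v_1)$. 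Hence $g(z_1;z_2,\ldots,z_m)/v_1(z_1)$ does not depend on $z_1$, and therefore extends to a holomorphic nowhere-vanishing function $\lambda$ on all of $\mathbb{H}_1^{m-1}$ with $g(z_1,\ldots,z_m)=v_1(z_1)\,\lambda(z_2,\ldots,z_m)$; as $\lambda$ inherits the hypotheses with $m-1$ in place of $m$, induction yields $\lambda=\mu\prod_{i=2}^{m}v_i(z_i)$, whence $g=\mu\,v_1(z_1)\prod_{i=2}^{m}v_i(z_i)$.

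Applying the claim with $m=n$ gives $g=\mu\prod_{i=1}^{n}v_i(z_i)$ over $\mathbb{C}$, and it remains to descend the rationality. Normalize each $v_i$ so that its $e(z_i/N)$-expansion at $\infty$ begins $e(\kappa_iz_i/N)(1+\cdots)$ for some $\kappa_i\in\mathbb{Z}$, and collect the old leading coefficients together with $\mu$ into a single constant $\lambda_0\in\mathbb{C}^\times$, so that $g=\lambda_0\prod_i\widehat v_i(z_i)$. Comparing this with the $\mathbb{Q}(\Z_N)$-rational expansion of $g$ coefficient by coefficient---starting from that of $e((\kappa_1z_1+\cdots+\kappa_nz_n)/N)$, which equals $\lambda_0$---shows first that $\lambda_0\in\mathbb{Q}(\Z_N)^\times$ and then, inductively, that all the coefficients of every $\widehat v_i$ lie in $\mathbb{Q}(\Z_N)$; in particular each $\widehat v_i$ is a level-$N$ modular function whose $e(z/N)$-expansion at $\infty$ has coefficients in $\mathbb{Q}(\Z_N)$. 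By the standard characterization of $\mathfrak{F}_N^1$ as the field of all such functions (cf.\ \cite{Lang}, \cite{Shimura-5}), each $\widehat v_i$ then lies in $\mathfrak{F}_N^1$, and, having no zeros or poles on $\mathbb{H}_1$, it is a modular unit of level $N$. Finally, putting $f_1=\lambda_0\widehat v_1$ and $f_i=\widehat v_i$ for $i\geq2$ gives $f(\mathrm{diag}(z_1,\ldots,z_n))=\prod_{i=1}^{n}f_i(z_i)$, as desired.

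The step I expect to be the main obstacle is the one phrased repeatedly above as ``for generic values of the other variables'': because the restriction of the denominator $Q$ to $\mathbb{H}_n^{\mathrm{diag}}$ may vanish on a thin set, one has to verify with some care that $g$ is a genuine (Laurent-at-the-cusps) level-$N$ modular function in each variable for generic values of the others, with holomorphically varying expansion coefficients, and that the divisor of a generic one-variable slice is truly independent of the slice---which is precisely what makes peeling off the factor $v_1$ in the induction legitimate. Once that is secured, the induction itself and the rationality descent are routine.
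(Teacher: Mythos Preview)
The paper does not give its own proof of this proposition; it simply cites \cite{Eum}. So there is no in-paper argument against which to compare yours.

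Your approach is sound and the key ideas are all correct: restriction to the diagonal, invariance under $\G(N)^n$ via the block embedding $SL_2(\mathbb{Z})^n\hookrightarrow Sp_{2n}(\mathbb{Z})$, the inductive peeling-off of one-variable factors by comparing cuspidal divisors on the compact curve $X(N)$, and the rationality descent to $\mathbb{Q}(\Z_N)$ by matching leading Fourier coefficients. One technical point you pass over, however, deserves explicit justification. When you write $f=P/Q$ and restrict to the diagonal in order to exhibit $g$ as a quotient of power series in $e(z_1/N),\ldots,e(z_n/N)$, you are implicitly assuming $Q|_{\mathbb{H}_n^{\mathrm{diag}}}\not\equiv 0$. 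This is not automatic for an arbitrary representation: there do exist nonzero Siegel modular forms vanishing identically on the diagonal (for $n=2$ the Igusa cusp form $\chi_{10}$ is the standard example), so a given denominator may well restrict to zero. The fix is short but should be stated: since by hypothesis $f$ has no poles on $\mathbb{H}_n^{\mathrm{diag}}$, the generic point of the diagonal does not lie in the pole locus of $f$, hence $f$ lies in the local ring of the (Baily--Borel compactified) Siegel modular variety at that generic point, and therefore admits a representation $P/Q$ with $Q$ outside the homogeneous prime ideal of forms vanishing on $\mathbb{H}_n^{\mathrm{diag}}$. With that choice of representation made at the outset, your Fourier-expansion arguments, the holomorphic variation of the slice coefficients, and the divisor-constancy step all go through as written.
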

\begin{proof}
See \cite[Theorem 4.2]{Eum}.
\end{proof}

Thus, we see from Proposition \ref{modular unit} that it is natural to investigate the class field of $K$ generated by the singular values $f(\theta_i)$ of modular units $f$.

\section{Siegel-Ramachandra invariants}\label{Siegel functions}
In this section we shall introduce the well-known modular units, Siegel functions, and review some necessary facts about the Siegel-Ramachandra invariants as singular values of Siegel functions.
\par
For a rational vector $\mathbf{r}=\left[\begin{matrix}r_1\\r_2\end{matrix}\right]\in({1}/{N})\mathbb{Z}^2\setminus\mathbb{Z}^2$ with an integer $N\geq 2$, we define the \textit{Siegel function} $g_{\mathbf{r}}(\tau)$ on $\tau\in\mathbb{H}_1$ by the following infinite product
\begin{equation*}
g_{\mathbf{r}}(\tau)=-q^{\frac{1}{2}\mathbf{B}_2(r_1)}e^{\pi i r_2(r_1-1)}(1-q^{r_1}e^{2\pi i r_2})\prod_{n=1}^\infty(1-q^{n+r_1}e^{2\pi i r_2})(1-q^{n-r_1}e^{-2\pi i r_2}),
\end{equation*}
where $\mathbf{B}_2(X)=X^2-X+{1}/{6}$ is the second Bernoulli polynomial and $q=e^{2\pi i\tau}$.
Then a Siegel function is a modular unit, namely, it is a modular function which has no zeros and poles on $\mathbb{H}_1$ (\cite{Siegel} or \cite[p.36]{Kubert}). 
Furthermore, the function $g_{\mathbf{r}}(\tau)^{12N}$ belongs to $\mathcal{F}_N^1$.

Let $F$ be a number field and $\mathfrak{m}$ be a modulus of $F$.
A modulus $\mathfrak{m}$ may be written as $\mathfrak{m}_0\mathfrak{m}_\infty$, where $\mathfrak{m}_0$ is an integral ideal of $F$ and $\mathfrak{m}_\infty$ is a product of distinct real infinite primes of $F$.
Let $I_F(\mathfrak{m})$ be the group of all fractional ideals of $F$ which are relatively prime to $\mathfrak{m}$ and $P_{F,1}(\mathfrak{m})$ be the subgroup of $I_F(\mathfrak{m})$ generated by the principal ideals $\alpha\o_F$, where $\alpha\in\o_F$ satisfies
\begin{itemize}
\item[\textup{(i)}] $\alpha\equiv 1\pmod{\mathfrak{m}_0}$
\item[\textup{(ii)}] $\tau(\alpha)>0$ for every real infinite prime $\tau$ dividing $\mathfrak{m}_\infty$.
\end{itemize}
Further, we let $\mathrm{Cl}_F(\mathfrak{m})=I_F(\mathfrak{m})/P_{F,1}(\mathfrak{m})$ be the ray class group of $F$ modulo $\mathfrak{m}$.
Then there exists a unique abelian extension $F_\mathfrak{m}$ of $F$ whose Galois group is isomorphic to $\mathrm{Cl}_F(\mathfrak{m})$ via the Artin map (\cite[Theorem 8.6]{Cox}).
Here, the field $F_\mathfrak{m}$ is called the \textit{ray class field} of $F$ modulo $\mathfrak{m}$.
And, it is well-known (\cite[Theorem 8.2]{Cox}) that any finite abelian extension of $F$ is contained in some ray class field $F_\mathfrak{m}$. 
\par
From now on, let $F=\mathbb{Q}(\sqrt{-d})$ be an imaginary quadratic field with square-free positive integer $d$ and let
\begin{equation*}
\theta=\left\{
\begin{array}{ll}
({-1+\sqrt{-d}})/{2}~ & \textrm{if $-d\equiv 1\pmod{4}$}\vspace{0.2cm}\\
\sqrt{-d}~ & \textrm{if $-d\equiv 2,3\pmod{4}$}
\end{array}
\right.
\end{equation*}
so that $\o_F=\mathbb{Z}[\theta]$.
Let $\mathfrak{f}$ be  a nontrivial proper integral ideal of $F$ and $N$ be the smallest positive integer in $\mathfrak{f}$.
For $C\in\mathrm{Cl}_F(\F)$, we take any integral ideal $\mathfrak{c}$ in $C$ and choose a basis $[\omega_1,\omega_2]$ of $\mathfrak{f}\mathfrak{c}^{-1}$ such that ${\omega_1}/{\omega_2}\in\mathbb{H}_1$.
Then we can write
\begin{equation*}
N=r_1\omega_1+r_2\omega_2
\end{equation*}
for some $r_1,r_2\in\mathbb{Z}$.
We define the \textit{Siegel-Ramachandra invariant} of conductor $\F$ at $C$  by 
\begin{equation*}
g_\F(C)=g_{\left[\begin{smallmatrix}r_1/N\\r_2/N\end{smallmatrix}\right]}({\omega_1}/{\omega_2})^{12N}.
\end{equation*}
This value depends only on the class $C$ and $\F$, not on the choice of $\mathfrak{c}$.

\begin{proposition}\label{imaginary generator}
Let $\F=N\o_F$ with an integer $N\geq 2$ and $C_0$ be the unit class in $\mathrm{Cl}_F(\F)$.
Assume that $F\neq \mathbb{Q}(\sqrt{-1}),\mathbb{Q}(\sqrt{-3})$.
Then the value
\begin{equation*}
g_\F(C_0)=g_{\left[\begin{smallmatrix}0\\1/N\end{smallmatrix}\right]}(\theta)^{12N}.
\end{equation*}
is a real algebraic integer.
Moreover, for any positive integer $n$, we get
\begin{equation*}
F_{(N)}=F\big(g_\F(C_0)^{n}\big).
\end{equation*}
\end{proposition}
\begin{proof}
\cite[Theorem 3.5 and Remark 3.6]{Jung}
\end{proof}

\section{Class fields over imaginary biquadratic fields (I)}\label{(I)}
We shall consider an imaginary biquadratic field $K=\mathbb{Q}(\sqrt{-d_1},\sqrt{-d_2})$ where
$d_1$, $d_2$ are square-free positive integer such that $-d_1\equiv 1\pmod{4}$, $-d_2\equiv 2,3\pmod{4}$ and $\gcd(d_1,d_2)=1$.
Then we have two imaginary quadratic subfields $K_1=\mathbb{Q}(\sqrt{-d_1})$, $K_2=\mathbb{Q}(\sqrt{-d_2})$ and one real quadratic subfield $K_3=\mathbb{Q}(\sqrt{d_1d_2})$ in $K$.

\begin{lemma}\label{integer-form}
The ring of integers $\mathcal{O}_K$ of $K$ is $\mathbb{Z}\big[(-1+\sqrt{-d_1})/2,\sqrt{-d_2}\big]$.
Consequently,
\begin{equation*}
\mathcal{O}_K=\left\{\frac{1}{2}\left[a+b\sqrt{-d_1}+c\sqrt{-d_2}+d\sqrt{d_1d_2}\right] ~\Big|~ a,b,c,d\in\mathbb{Z},~ a\equiv b~(\bmod{~2}),~ c\equiv d~(\bmod{~2}) \right\}.
\end{equation*}
\begin{proof}
By \cite[Chapter I, Theorem 9.5]{Janusz} we have $\mathcal{O}_K=\mathcal{O}_{K_1}\mathcal{O}_{K_2}=\mathbb{Z}\Big[\frac{-1+\sqrt{-d_1}}{2},\sqrt{-d_2}\Big]$.
Hence we deduce
\begin{eqnarray*}
\mathcal{O}_K &=& \left\{A+B\Big(\frac{-1+\sqrt{-d_1}}{2}\Big)+C\sqrt{-d_2}+D\Big(\frac{-\sqrt{-d_2}-\sqrt{d_1d_2}}{2}  \Big)~\Big|~ A,B,C,D\in\mathbb{Z}  \right\}\\
&=&\left\{ \frac{1}{2}\Big[ (2A-B)+B\sqrt{-d_1}+(2C-D)\sqrt{-d_2}-D\sqrt{d_1d_2}\Big]~\Big|~  A,B,C,D\in\mathbb{Z}        \right\} \\
&=&\left\{ \frac{1}{2}\Big[ a+b\sqrt{-d_1}+c\sqrt{-d_2}+d\sqrt{d_1d_2}\Big]~\Big|~ a,b,c,d\in\mathbb{Z},~ a\equiv b~(\bmod{~2}),~ c\equiv d~(\bmod{~2}) \right\} .
\end{eqnarray*}
\end{proof}
\end{lemma}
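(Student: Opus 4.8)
The plan is to deduce the lemma from the classical theorem that the ring of integers of a compositum of two number fields with coprime discriminants is the product of their rings of integers (cf.\ \cite{Janusz}), and then to repackage the resulting integral basis into the stated congruence form.

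First I would record the arithmetic data of the two imaginary quadratic subfields. From $-d_1\equiv 1\pmod 4$ one has that $d_1$ is odd, $\mathcal{O}_{K_1}=\mathbb{Z}\big[\tfrac{-1+\sqrt{-d_1}}{2}\big]$ and $\mathrm{disc}(K_1)=-d_1$; from $-d_2\equiv 2,3\pmod 4$ one has $\mathcal{O}_{K_2}=\mathbb{Z}[\sqrt{-d_2}]$ and $\mathrm{disc}(K_2)=-4d_2$. Since $d_1$ is odd and $(d_1,d_2)=1$, the integers $-d_1$ and $-4d_2$ are coprime; and since $K_1\neq K_2$ are distinct quadratic fields, $K_1\cap K_2=\mathbb{Q}$ and $[K:\mathbb{Q}]=4=[K_1:\mathbb{Q}]\,[K_2:\mathbb{Q}]$. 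Hence the hypotheses of the compositum theorem are satisfied, giving $\mathcal{O}_K=\mathcal{O}_{K_1}\mathcal{O}_{K_2}=\mathbb{Z}\big[\tfrac{-1+\sqrt{-d_1}}{2},\sqrt{-d_2}\big]$, which is the first assertion.

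For the explicit description I would start from the $\mathbb{Z}$-basis $\big\{1,\ \tfrac{-1+\sqrt{-d_1}}{2},\ \sqrt{-d_2},\ \tfrac{-1+\sqrt{-d_1}}{2}\sqrt{-d_2}\big\}$ of $\mathcal{O}_{K_1}\mathcal{O}_{K_2}$, rewrite the last generator as $\tfrac{-\sqrt{-d_2}-\sqrt{d_1d_2}}{2}$ using $\sqrt{-d_1}\sqrt{-d_2}=-\sqrt{d_1d_2}$, and expand a general $\mathbb{Z}$-combination $A+B\tfrac{-1+\sqrt{-d_1}}{2}+C\sqrt{-d_2}+D\tfrac{-\sqrt{-d_2}-\sqrt{d_1d_2}}{2}$ over the common denominator $2$. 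Comparing coefficients of $1,\sqrt{-d_1},\sqrt{-d_2},\sqrt{d_1d_2}$, this element equals $\tfrac12\big[a+b\sqrt{-d_1}+c\sqrt{-d_2}+d\sqrt{d_1d_2}\big]$ with $a=2A-B$, $b=B$, $c=2C-D$, $d=-D$, and a direct check shows $(A,B,C,D)\mapsto(a,b,c,d)$ is a $\mathbb{Z}$-linear bijection of $\mathbb{Z}^4$ onto the set of quadruples with $a\equiv b\pmod 2$ and $c\equiv d\pmod 2$: the two congruences are forced since $a\equiv-b$ and $c\equiv-d\pmod 2$, while the inverse substitution $B=b$, $A=(a+b)/2$, $D=-d$, $C=(c-d)/2$ has integer entries precisely under those congruences. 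This yields the displayed form of $\mathcal{O}_K$.

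The only point that requires genuine care is checking that the hypotheses of the compositum-of-rings-of-integers theorem hold, above all the coprimality of $\mathrm{disc}(K_1)=-d_1$ and $\mathrm{disc}(K_2)=-4d_2$, which is exactly where the standing assumptions $-d_1\equiv 1\pmod 4$ (forcing $d_1$ odd) and $(d_1,d_2)=1$ are used; the rest is elementary linear algebra over $\mathbb{Z}$.
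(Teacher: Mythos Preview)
Your proof is correct and follows essentially the same route as the paper: both invoke the compositum-of-rings-of-integers theorem (the paper cites \cite[Theorem 9.5]{Janusz}) to obtain $\mathcal{O}_K=\mathcal{O}_{K_1}\mathcal{O}_{K_2}$, then perform the identical change of variables $a=2A-B$, $b=B$, $c=2C-D$, $d=-D$ on the natural $\mathbb{Z}$-basis. Your version is a bit more explicit in verifying the coprimality of discriminants and the bijectivity of the change of variables, but there is no substantive difference in approach.
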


Let $N$ be a positive integer and $p$ be an odd prime not dividing $N$.
For a non-negative integer $\mu$, we set
\begin{eqnarray*}
S_{N,p,\mu}&=&\{a\in K^\times~|~a\equiv 1\pmod{Np^\mu\mathcal{O}_K}, ~\textrm{$a$ is prime to $p\mathcal{O}_K$} \},\\
S^{(i)}_{N,p,\mu}&=&\{a\in K_i^\times~|~a\equiv 1\pmod{Np^\mu\mathcal{O}_{K_i}}, ~\textrm{$a$ is prime to $p\mathcal{O}_{K_i}$} \}\quad\textrm{for $i=1,2$}.
\end{eqnarray*}
Further, we let 
\begin{eqnarray*}
H_{N,p,\mu}&=&S_{N,p,\mu+1}(S_{N,p,\mu}\cap\mathcal{O}_K^\times),\\
H_{N,p,\mu}^{(i)}&=&S_{N,p,\mu+1}^{(i)}(S_{N,p,\mu}^{(i)}\cap\mathcal{O}_{K_i}^\times)\quad\textrm{for $i=1,2$}.
\end{eqnarray*}
Then we achieve the isomorphisms
\begin{equation}\label{Galois group}
\begin{array}{cllll}
\mathrm{Gal}(K_{(Np^{\mu+1})}/K_{(Np^{\mu})})&\cong&  S_{N,p,\mu} \mathcal{O}_K^\times/S_{N,p,\mu+1}\mathcal{O}_K^\times &\cong& S_{N,p,\mu}/H_{N,p,\mu}\vspace{0.2cm}\\
\mathrm{Gal}((K_i)_{(Np^{\mu+1})}/(K_i)_{(Np^{\mu})})&\cong&  S_{N,p,\mu}^{(i)} \mathcal{O}_{K_i}^\times/S_{N,p,\mu+1}^{(i)}\mathcal{O}_{K_i}^\times &\cong& S_{N,p,\mu}^{(i)}/H_{N,p,\mu}^{(i)}
\end{array}
\end{equation}
by class field theory (\cite[Chapter V $\S$6]{Janusz}).
Since $N_{K/K_i}(H_{N,p,\mu})\subset H_{N,p,\mu}^{(i)}$ for $i=1,2$, we can define a homomorphism
\begin{equation*}
\widetilde{\V_{N,p,\mu}^{1,2}}:S_{N,p,\mu}/H_{N,p,\mu}\rightarrow S_{N,p,\mu}^{(1)}/H_{N,p,\mu}^{(1)}\times S_{N,p,\mu}^{(2)}/H_{N,p,\mu}^{(2)},
\end{equation*}
by
\begin{eqnarray*}
\widetilde{\V_{N,p,\mu}^{1,2}}(aH_{N,p,\mu})=\left(N_{K/K_1}(a)H_{N,p,\mu}^{(1)},~ N_{K/K_2}(a)H_{N,p,\mu}^{(2)}\right).
\end{eqnarray*}
Let $\widetilde{K_{N,p,\mu}^{1,2}}$  be the class field of $K$ corresponding to $\ker(\widetilde{\V_{N,p,\mu}^{1,2}})$.
Note that 
\begin{equation*}
\widetilde{K_{N,p,\mu}^{1,2}}=K_{(Np^{\mu})}(K_{(Np^{\mu+1})}\cap \widetilde{K_\mathrm{ab}}), 
\end{equation*}
where $\widetilde{K_\mathrm{ab}}$ is the class field of $K$ described in Remark \ref{class field}.

\begin{lemma}\label{widetilde}
With the notations as above, for a non-negative integer $\mu$ we obtain
\begin{eqnarray*}
\widetilde{K_{N,p,\mu}^{1,2}}=K_{(Np^\mu)}(K_1)_{(Np^{\mu+1})}(K_2)_{(Np^{\mu+1})}.
\end{eqnarray*}
\end{lemma}

\begin{proof}
For each index $i=1,2$, let $\gamma_i$ be a primitive generator of $(K_i)_{(Np^{\mu+1})}$ over $(K_i)_{(Np^\mu)}$.
Since $(K_i)_{(Np^\mu)}\subset K_{(Np^\mu)}$ for $i=1,2$, we attain
\begin{equation*}
K_{(Np^\mu)}(K_1)_{(Np^{\mu+1})}(K_2)_{(Np^{\mu+1})}=K_{(Np^\mu)}(\gamma_1,\gamma_2).
\end{equation*}
If $a H_{N,p,\mu}\in\ker(\widetilde{\V_{N,p,\mu}^{1,2}})$, then $N_{K/K_i}(a)\in H_{N,p,\mu}^{(i)}$ for $i=1,2$.
Since $\gamma_i\in(K_i)_{(Np^{\mu+1})}$, we have
\begin{equation*}
\gamma_i^{\big(\frac{K_{(Np^{\mu+1})}/K}{(a)}\big)}=\gamma_i^{\big(\frac{{(K_i)}_{(Np^{\mu+1})}/K_i}{N_{K/K_i}((a))}\big)}
=\gamma_i~~~\textrm{for $i=1,2$}
\end{equation*}
(\cite[Chapter III $\S$3]{Janusz}).
Hence $\widetilde{K_{N,p,\mu}^{1,2}}\supset K_{(Np^\mu)}(\gamma_1,\gamma_2)$.
\par
Conversely, let $bH_{N,p,\mu}\in S_{N,p,\mu}/H_{N,p,\mu}$ such that 
\begin{equation*}
\gamma_i^{\left(\frac{K_{(Np^{\mu+1})}/K}{(b)}\right)}=\gamma_i\quad \textrm{for $i=1,2$}.
\end{equation*}
Since $\gamma_i$ is the primitive generator of $(K_i)_{(Np^{\mu+1})}/(K_i)_{(Np^\mu)}$, we obtain $N_{K/K_i}(b)\in H_{N,p,\mu}^{(i)}$ for $i=1,2$ by (\ref{Galois group}).
Thus $b H_{N,p,\mu}\in\ker(\widetilde{\V_{N,p,\mu}^{1,2}})$, and so $\widetilde{K_{N,p,\mu}^{1,2}}\subset K_{(Np^\mu)}(\gamma_1,\gamma_2)$.
This completes the proof.
\end{proof}

For a non-negative integer $\mu$ and $i=1,2$, we let $\F_{\mu,i}=Np^\mu\o_{K_i}$ and $C_{\mu,i}$ be the unit class in 
$\mathrm{Cl}_{K_i}(\F_{\mu,i})$.

\begin{corollary}\label{primitive generator}
Assume that $K_1,K_2\neq \mathbb{Q}(\sqrt{-1}),\mathbb{Q}(\sqrt{-3})$.
For any positive integers $n_1$, $n_2$, the value
\begin{equation*}
\prod_{i=1}^2 g_{\F_{\mu+1,i}}(C_{\mu+1,i})^{n_i}
\end{equation*}
generates $\widetilde{K_{N,p,\mu}^{1,2}}$ over $K_{(Np^\mu)}$.
\end{corollary}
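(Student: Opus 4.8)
The plan is to combine Proposition~\ref{imaginary generator} (which tells us that each $\gamma_{\mu+1,i}$ generates $(K_i)_{(Np^{\mu+1})}$ over $K_i$, hence a fortiori over $(K_i)_{(Np^\mu)}$) with Lemma~\ref{widetilde}, whose proof already identifies $\widetilde{K_{N,p,\mu}^{1,2}}$ with $K_{(Np^\mu)}(\gamma_1,\gamma_2)$ for \emph{any} choice of primitive generators $\gamma_i$ of $(K_i)_{(Np^{\mu+1})}/(K_i)_{(Np^\mu)}$. So the first step is simply to observe that $\gamma_{\mu+1,i}$ is such a primitive generator: by Proposition~\ref{imaginary generator} with $N$ replaced by $Np^{\mu+1}$ and $n=1$ we have $(K_i)_{(Np^{\mu+1})}=K_i(\gamma_{\mu+1,i})$, and since $(K_i)_{(Np^\mu)}\supseteq K_i$ this forces $(K_i)_{(Np^{\mu+1})}=(K_i)_{(Np^\mu)}(\gamma_{\mu+1,i})$. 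Applying Lemma~\ref{widetilde} with $(i,j)=(1,2)$ then yields $\widetilde{K_{N,p,\mu}^{1,2}}=K_{(Np^\mu)}(\gamma_{\mu+1,1},\gamma_{\mu+1,2})$. Note the hypothesis $K_1,K_2\neq\mathbb{Q}(\sqrt{-1}),\mathbb{Q}(\sqrt{-3})$ is exactly what Proposition~\ref{imaginary generator} needs.

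The remaining task is to replace the pair $(\gamma_{\mu+1,1},\gamma_{\mu+1,2})$ by the single element $\gamma_{\mu+1,1}^{n_1}\gamma_{\mu+1,2}^{n_2}$. The natural approach is a Galois-theoretic argument on $L:=K_{(Np^\mu)}(\gamma_{\mu+1,1},\gamma_{\mu+1,2})$ over $F:=K_{(Np^\mu)}$: writing $\alpha=\gamma_{\mu+1,1}^{n_1}\gamma_{\mu+1,2}^{n_2}$, I want to show that every $\sigma\in\mathrm{Gal}(L/F)$ fixing $\alpha$ is the identity. Now $\mathrm{Gal}(L/F)$ embeds into $\mathrm{Gal}((K_1)_{(Np^{\mu+1})}/(K_1)_{(Np^\mu)})\times\mathrm{Gal}((K_2)_{(Np^{\mu+1})}/(K_2)_{(Np^\mu)})$ via the restriction maps, because $L$ is the compositum of the two abelian-over-$F$ fields $F(\gamma_{\mu+1,1})$ and $F(\gamma_{\mu+1,2})$. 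Using Proposition~\ref{Siegel action}, the action of $\sigma$ on each $\gamma_{\mu+1,i}$ is given explicitly by a Siegel function with transformed index; in particular, for $i=1$ with $\theta_1$ corresponding to $-d_1\equiv1\pmod 4$ and for $i=2$ with $\theta_2$ corresponding to $-d_2\equiv2,3\pmod4$, the conjugate of $g_{(0,1/Np^{\mu+1})}^{12Np^{\mu+1}}(\theta_i)$ under an ideal class represented by $\O_i=s_i\theta_i+t_i$ is $g_{(0,1/Np^{\mu+1})\left(\begin{smallmatrix}t_i-B_{\theta_i}s_i&-C_{\theta_i}s_i\\s_i&t_i\end{smallmatrix}\right)}^{12Np^{\mu+1}}(\theta_i)$, i.e.\ $g_{(s_i/Np^{\mu+1},\,t_i/Np^{\mu+1})}^{12Np^{\mu+1}}(\theta_i)$.

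The key point is then a linear-independence / distinctness statement: if $\sigma$ fixes the product $\gamma_{\mu+1,1}^{n_1}\gamma_{\mu+1,2}^{n_2}$, then $\left(\sigma(\gamma_{\mu+1,1})/\gamma_{\mu+1,1}\right)^{n_1}=\left(\gamma_{\mu+1,2}/\sigma(\gamma_{\mu+1,2})\right)^{n_2}$, an equality of an element of $(K_1)_{(Np^{\mu+1})}$ with an element of $(K_2)_{(Np^{\mu+1})}$, hence of an element lying in $(K_1)_{(Np^{\mu+1})}\cap (K_2)_{(Np^{\mu+1})}$. The plan is to argue that this intersection is small enough — contained in $K_{(Np^\mu)}$, or at least abelian over $K$ of controlled degree — that the ratio $\sigma(\gamma_{\mu+1,i})/\gamma_{\mu+1,i}$ is forced to be trivial, so $\sigma$ fixes each $\gamma_{\mu+1,i}$ and is therefore the identity on $L$. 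Here one uses that $K_1K_2=K$ and that $(K_1)_{(Np^{\mu+1})}$ and $(K_2)_{(Np^{\mu+1})}$ are linearly disjoint over their intersection inside $K_{ab}$, together with a ramification or conductor comparison to pin down that intersection; alternatively, one can invoke Corollary~\ref{unit norm} and the structure of the Galois groups in~(\ref{Galois group}) to see directly that the simultaneous triviality of the two norm-components forces triviality. I expect this disjointness/intersection step to be the main obstacle, since it requires knowing that combining the generators multiplicatively does not lose information — equivalently, that no nontrivial automorphism acts on $\gamma_{\mu+1,1}$ and $\gamma_{\mu+1,2}$ by reciprocal scalars; controlling that is precisely where the explicit $q$-expansion of the Siegel functions and the root-of-unity factors from Proposition~\ref{Siegel action}(ii) and Lemma~\ref{Siegel conjugation} will be needed.
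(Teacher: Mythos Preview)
Your first step---using Proposition~\ref{imaginary generator} and Lemma~\ref{widetilde} to obtain $\widetilde{K_{N,p,\mu}^{1,2}}=K_{(Np^\mu)}(\gamma_{\mu+1,1}^{n_1},\gamma_{\mu+1,2}^{n_2})$---is exactly what the paper does. (Note that Proposition~\ref{imaginary generator} applies directly with exponent $n_i$, so you get the powers for free.)

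For the second step, however, you are working much harder than necessary, and you have correctly identified that your intersection argument is not complete. The paper bypasses the whole disjointness/intersection analysis with a one-line absolute-value trick: by \cite[Theorem~3.5 and Remark~3.6]{Jung}, for each $i=1,2$ and every $\sigma\in\mathrm{Gal}\big((K_i)_{(Np^{\mu+1})}/K_i\big)\setminus\{\mathrm{Id}\}$ one has the strict inequality
\[
\big|(\gamma_{\mu+1,i}^{n_i})^{\sigma}\big|>\big|\gamma_{\mu+1,i}^{n_i}\big|.
\]
Now take any non-identity $\sigma\in\mathrm{Gal}\big(\widetilde{K_{N,p,\mu}^{1,2}}/K_{(Np^\mu)}\big)$. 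Its restriction to at least one of the two factors $(K_i)_{(Np^{\mu+1})}$ is non-trivial, so that factor's absolute value strictly increases under $\sigma$; the other factor's absolute value either stays the same or also increases. Hence $\big|(\gamma_{\mu+1,1}^{n_1}\gamma_{\mu+1,2}^{n_2})^{\sigma}\big|>\big|\gamma_{\mu+1,1}^{n_1}\gamma_{\mu+1,2}^{n_2}\big|$, so $\sigma$ cannot fix the product. This immediately gives primitivity, with no need to analyze $(K_1)_{(Np^{\mu+1})}\cap(K_2)_{(Np^{\mu+1})}$ or to unpack $q$-expansions. The moral is that the same archimedean inequality that proves Proposition~\ref{imaginary generator} in \cite{Jung} is strong enough to handle the product as well.
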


\begin{proof}
It follows from Proposition \ref{imaginary generator} and Lemma \ref{widetilde} that
\begin{equation*}
\widetilde{K_{N,p,\mu}^{1,2}}=K_{(Np^\mu)}\Big(g_{\F_{\mu+1,1}}(C_{\mu+1,1})^{n_1},~g_{\F_{\mu+1,2}}(C_{\mu+1,2})^{n_2}\Big).
\end{equation*}
Note that 
\begin{equation*}
\big|(g_{\F_{\mu+1,i}}(C_{\mu+1,i})^{n_i})^\tau\big|>\big|g_{\F_{\mu+1,i}}(C_{\mu+1,i})^{n_i}\big|
\end{equation*}
for $i=1,2$ and $\tau\in\mathrm{Gal}((K_i)_{(Np^{\mu+1})}/K_i)\setminus\{\mathrm{Id}\}$
(\cite[Theorem 3.5 and Remark 3.6]{Jung}).
Hence 
\begin{equation*}
\left(\prod_{i=1}^2 g_{\F_{\mu+1,i}}(C_{\mu+1,i})^{n_i}\right)^\tau\neq\prod_{i=1}^2 g_{\F_{\mu+1,i}}(C_{\mu+1,i})^{n_i}
\end{equation*}
for $\tau\in\mathrm{Gal}\big(\widetilde{K_{N,p,\mu}^{1,2}}/K_{(Np^\mu)}\big)\setminus\{\mathrm{Id}\}$.
This proves the corollary.\end{proof}

In this section we shall consider only the case $\mu=0$.
As for the other cases, see Section \ref{(II)}.

\begin{lemma}\label{S-order}
With the notation as above, we get
\begin{eqnarray*}                                                                                                                                                                                                                                                                                                                                                                                                                                                                                                                                                                                                                                                                                                                                                                                                                                                                                                                                                                                                                                                                                                                                                                                                                                                                                                                                                                                                                                                                                                                                                                               
\big|S_{N,p,0}/S_{N,p,1}\big|=(p-1)\prod_{j=1}^3 m_{p,j},
\end{eqnarray*}
where $m_{p,i}=p-\big(\frac{-d_i}{p}\big)$ for $i=1,2$ and $m_{p,3}=p-\big(\frac{d_1 d_2}{p}\big)$.
\end{lemma}
\begin{proof}
Note that for any coset $aS_{N,p,1}$ in $S_{N,p,0}/S_{N,p,1}$, there is an element $a'\in  S_{N,p,0}\cap\mathcal{O}_K$ 
such that $aS_{N,p,1}=a'S_{N,p,1}$.
Here we claim that the map
\begin{equation}\label{group isomorphism}
\begin{array}{ccl}
\psi:S_{N,p,0}/S_{N,p,1}&\longrightarrow& (\mathcal{O}_K/p\mathcal{O}_K)^\times   \\
\omega S_{N,p,1} &\longmapsto& \omega+p\mathcal{O}_K \textrm{~~~ for $\O\in S_{N,p,0}\cap\mathcal{O}_K$}
\end{array}
\end{equation}
is an isomorphism.
Indeed, if $\O, \O'\in S_{N,p,0}\cap\mathcal{O}_K$ such that $\O S_{N,p,1}=\O'S_{N,p,1}$, then $\O'=\O\alpha$ for some $\alpha\in S_{N,p,1}$.
We can then write $\alpha=a/b$ with $a,b\in \mathcal{O}_K$ such that $a,b$ are prime to $Np\o_K$ and $a\equiv b\pmod{Np\o_K}$.
So $\O' b= \O a\equiv \O b\pmod{p\o_K}$, from which we have $\O'\equiv \O \pmod{p\o_K}$.
Thus $\psi$ is a well-defined homomorphism.
If $\omega S_{N,p,1}\in\ker(\psi)$ then $\O\equiv 1\pmod{p\o_K}$.
Since $\gcd(N,p)=1$, we obtain $\O\equiv 1\pmod{Np\o_K}$, and so $\psi$ is injective.
For given $\omega+p\mathcal{O}_K\in(\o_K/p\o_K)^\times$, we can find $a\in\o_K$ such that $\O+pa\equiv 1\pmod{N\o_K}$.
Therefore $\psi$ is surjective, and hence the claim is proved.
By \cite[Proposition 4.2.12]{Cohen0} we derive
\begin{equation*}
|(\o_K/p\o_K)^\times|=\mathcal{N}(p\o_K)\cdot\displaystyle\prod_{\substack{\mathfrak{p}\,|\, p\mathcal{O}_K\\ \mathfrak{p}~\textrm{prime} }}\left(1-\frac{1}{\mathcal{N}(\mathfrak{p})}\right).
\end{equation*}
By considering all possible prime ideal factorization of $p\o_K$, we get the conclusion
(\cite[p.74 and p.116]{Marcus}).
\end{proof}
From now on, we assume that $N\neq 2$ and $K_1,K_2\neq \mathbb{Q}(\sqrt{-1}),\mathbb{Q}(\sqrt{-3})$.
Let $\varepsilon_0$ be the fundamental unit of the real quadratic field $K_3$.
Since $d_1\equiv 3\pmod{4}$, the norm of $\varepsilon_0$ is 1.
We let $m_0$ be the smallest positive integer such that 
$\varepsilon_0^{m_0}\equiv \pm1\pmod{N\o_K}$, and set 
\begin{equation*}
\varepsilon_0' =\left\{
\begin{array}{ll}
\varepsilon_0^{m_0} &\textrm{if $N\neq 1$ and $\varepsilon_0^{m_0}\equiv +1\pmod{N\o_K}$}\\
-\varepsilon_0^{m_0} &\textrm{if $N\neq 1$ and $\varepsilon_0^{m_0}\equiv -1\pmod{N\o_K}$}\\
\varepsilon_0 &\textrm{if $N=1$}.
\end{array}
\right.
\end{equation*}
Further, we let $n_0$ be the smallest positive integer for which $(\varepsilon_0')^{n_0}\equiv 1\pmod{Np\o_K}$.

\begin{lemma}\label{H-order}
With the assumptions as above, we deduce
\begin{eqnarray*}
\big|H_{N,p,0}/S_{N,p,1}\big|=\left\{
\begin{array}{ll}
n_0& \textrm{if $N\neq 1$}\\
n_0\cdot Q(K)& \textrm{if $N=1$ and $n_0$ is even}\\
2n_0\cdot Q(K)& \textrm{if $N=1$ and $n_0$ is odd},
\end{array}\right.
\end{eqnarray*}
where $Q(K)=\big[\o_K^\times:\o_{K_1}^\times\o_{K_2}^\times\o_{K_3}^\times\big]$.
\end{lemma}
\begin{proof}
By the assumption, $\o_{K_1}^\times=\o_{K_2}^\times=\{\pm 1\}$ so that $\o_{K_1}^\times\o_{K_2}^\times\o_{K_3}^\times=\o_{K_3}^\times$.
Observe that
\begin{equation*}
\big|H_{N,p,0}/S_{N,p,1}\big|=\big|H_{N,p,0}/S_{N,p,1}(S_{N,p,0}\cap \o_{K_3}^\times)\big|\cdot \big|S_{N,p,1}(S_{N,p,0}\cap \o_{K_3}^\times)/S_{N,p,1}\big|.
\end{equation*}
\par
First, we consider the group $H_{N,p,0}/S_{N,p,1}(S_{N,p,0}\cap\o_{K_3}^\times)$.
Suppose $N\neq 1$.
Then we claim that $S_{N,p,0}\cap\o_{K}^\times\subset\mathbb{R}$, namely, $H_{N,p,0}=S_{N,p,1}(S_{N,p,0}\cap \o_{K_3}^\times)$.
Indeed, let $\varepsilon\in S_{N,p,0}\cap \o_K^\times$.
By Lemma \ref{integer-form} we can write 
\begin{equation*}
\varepsilon=\frac{1}{2}\Big[a+b\sqrt{-d_1}+c\sqrt{-d_2}+d\sqrt{d_1d_2}\Big]
\end{equation*} 
with $a,b,c,d\in\mathbb{Z}$ such that $a\equiv b\pmod{2}$, $c\equiv d\pmod{2}$.
Since $\varepsilon\equiv 1\pmod{N\o_K}$, we have $a\equiv 2\pmod{N}$ and so $a\neq 0$.
Here we note that
\begin{eqnarray*}
\varepsilon^2&=&\frac{1}{4}\Big[a^2-b^2d_1-c^2d_2+d^2d_1d_2+2\big\{(ab+cdd_2)\sqrt{-d_1}+(ac+bdd_1)\sqrt{-d_2}+(ad-bc)\sqrt{d_1d_2}   \big\}\Big]\\
&\in&\o_{K_1}^\times\o_{K_2}^\times\o_{K_3}^\times\subset\mathbb{R}
\end{eqnarray*}
because $Q(K)=1~\textrm{or}~2$.
And, we obtain
\begin{equation}\label{real}
\begin{array}{lll}
ab+cdd_2&=&0\\
ac+bdd_1&=&0, 
\end{array}
\end{equation}
hence $d(b^2d_1-c^2d_2)=0$.
If $d=0$, then $ab=ac=0$ which enables us to get $b=c=0$.
Thus $\varepsilon=a/2\in\mathbb{R}$ in this case.
Now, suppose that $d\neq 0$ and $b^2d_1-c^2d_2=0$.
Then it follows from (\ref{real}) that $-{(a^2b)}/{(dd_2)}=ac=-bdd_1$, and we have $b(a^2-d^2d_1d_2)=0$.
If $b=0$ then $c=0$, and hence $\varepsilon=(a+d\sqrt{d_1d_2})/2\in\mathbb{R}$.
If $a^2-d^2d_1d_2=0$, then we achieve
\begin{eqnarray*}
N_{K/K_1}(\varepsilon)&=&\frac{1}{4}\big[a^2-b^2d_1+c^2d_2-d^2d_1d_2+2(ab-cdd_2)\sqrt{-d_1}\big]\\
&=&\frac{1}{2}(ab-cdd_2)\sqrt{-d_1}\\
&\neq&\pm 1.
\end{eqnarray*}
This contradicts the assumption $\o_{K_1}=\{\pm 1\}$, and the claim is justified.
\par
If $N=1$, then $S_{1,p,0}\cap\o_{K}^\times=\o_{K}^\times$ and $S_{1,p,0}\cap\o_{K_3}^\times=\o_{K_3}^\times$.
Hence one can show $S_{1,p,1} \cap \o_K^\times \subset \mathbb{R}$ in a similar fashion as in the above claim.
And, we establish
\begin{equation*}
\begin{array}{lll}
H_{1,p,0}/S_{1,p,1}(S_{1,p,0}\cap \o_{K_3}^\times)&=&S_{1,p,1}\o_{K}^\times /S_{1,p,1}\o_{K_3}^\times\vspace{0.1cm}\\
&\cong& \o_{K}^\times/\o_{K_3}^\times(S_{1,p,1}\cap \o_{K}^\times)\vspace{0.1cm}\\
&\cong&\o_{K}^\times/\o_{K_3}^\times.
\end{array}
\end{equation*}
Therefore we attain
\begin{equation}\label{first-order}
\big|H_{N,p,0}/S_{N,p,1}(S_{N,p,0}\cap \o_{K_3}^\times)\big|=\left\{
\begin{array}{ll}
1&\textrm{if $N\neq 1$}\\
Q(K)&\textrm{if $N=1$}.
\end{array}\right.
\end{equation}
\par
Next, we consider the group $S_{N,p,1}(S_{N,p,0}\cap \o_{K_3}^\times)/S_{N,p,1}$.
If $N\neq 1$, then $S_{N,p,0}\cap \o_{K_3}^\times=\{(\varepsilon_0')^n~|~n\in\mathbb{Z} \}$, and hence we have
\begin{equation*}
\big|S_{N,p,1}(S_{N,p,0}\cap \o_{K_3}^\times)/S_{N,p,1}\big|=\big|\langle\varepsilon_0'S_{N,p,1} \rangle\big|=n_0.
\end{equation*}
Now, suppose $N=1$.
If $n_0$ is even, then $(\varepsilon_0')^{{n_0}/{2}}$ is the root of the equation $X^2\equiv 1\pmod{p\o_{K_3}}$.
Write $(\varepsilon_0')^{{n_0}/{2}}=\alpha+\beta\sqrt{d_1 d_2}$ with $\alpha, \beta\in\mathbb{Z}$.
Then we get
\begin{eqnarray*}
\alpha^2+d_1 d_2 \beta^2&\equiv& 1 \pmod{p}\\
2\alpha\beta&\equiv& 0 \pmod{p}.
\end{eqnarray*}
If $p$ divides $\alpha$, then $d_1d_2\beta^2 \equiv 1 \pmod{p}$.
Since $N_{K_3/\mathbb{Q}}\big((\varepsilon_0')^{{n_0}/{2}}\big)=\alpha^2- d_1 d_2\beta^2=1$, it is a contradiction.
Thus $p$ divides $\beta$, and we achieve $(\varepsilon_0')^{{n_0}/{2}}\equiv -1\pmod{p\o_K}$ by the minimality of $n_0$.
If $n_0$ is odd, then $(\varepsilon_0')^n\not\equiv -1\pmod{p\o_K}$ for all $n\in\mathbb{Z}_{>0}$.
Indeed, if $(\varepsilon_0')^n\equiv -1\pmod{p\o_K}$ for some $n\in\mathbb{Z}_{>0}$, then 
$(\varepsilon_0')^{2n}\equiv 1\pmod{p\o_K}$.
By definition of $n_0$ we see that $n_0$ divides $2n$.
Since $n_0$ is odd, $n_0$ divides $n$.
But $(\varepsilon_0')^n\not\equiv 1\pmod{p\o_K}$, so it gives a contradiction.
Since $ \o_{K_3}^\times=\{\pm(\varepsilon_0')^n~|~n\in\mathbb{Z}\}$, we derive that
\begin{equation*}
S_{1,p,1}(S_{1,p,0}\cap \o_{K_3}^\times)/S_{1,p,1} = S_{1,p,1}\o_{K_3}^\times/S_{1,p,1}\cong \left\{
\begin{array}{ll}
\mathbb{Z}/{n_0\mathbb{Z}} & \textrm{if $n_0$ is even}\\
\mathbb{Z}/2\mathbb{Z}\times\mathbb{Z}/{n_0\mathbb{Z}} & \textrm{if $n_0$ is odd}.
\end{array}\right.
\end{equation*}
And, we deduce
\begin{equation}\label{second-order}
\big|S_{N,p,1}(S_{N,p,0}\cap \o_{K_3}^\times)/S_{N,p,1}\big| = \left\{
\begin{array}{ll}
n_0 & \textrm{if $N\neq 1$ or $n_0$ is even}\\
2n_0 & \textrm{if $N=1$ and $n_0$ is odd}.
\end{array}\right.
\end{equation}
Therefore, the lemma follows from (\ref{first-order}) and (\ref{second-order}).
\end{proof}

\begin{lemma}\label{unit norm}
Let $F=\mathbb{Q}(\sqrt{d})$ be a quadratic field with square-free integer $d$ and $p$ be an odd prime such that $p\nmid d$.
Then we attain
\begin{equation*}
\{\O+p\o_F\in (\o_F/p\o_F)^\times~|~N_{F/\mathbb{Q}}(\O)\equiv 1\pmod{p}\}\cong\mathbb{Z}/m\mathbb{Z},
\end{equation*}
with $m=p-\big(\frac{d}{p}\big)$.
\end{lemma}
\begin{proof}
First, consider an affine curve 
\begin{equation*}
\mathcal{C}:x^2-d y^2=1
\end{equation*}
defined over a finite field $\mathbb{F}_p$.
Then $\mathcal{C}$ is smooth since $p\nmid d$.
We define a group law on $\mathcal{C}$ by
\begin{equation*}
(r,s)\oplus(t,u)=(rt+d su, ru+st)
\end{equation*}
for points $(r,s)$, $(t,u)$ on $\mathcal{C}$.
Then the group $\mathcal{C}(\mathbb{F}_p)$ is isomorphic to $\mathbb{Z}/m\mathbb{Z}$ (\cite{Lemmermeyer}).
\par
On the other hand, since $\gcd(2,p)=1$, we obtain
\begin{equation*}
\o_F/p\o_F=\{x+y\sqrt{d}+p\o_F~|~x,y\in\mathbb{F}_p \}.
\end{equation*}
And, the map
\begin{equation*}
\begin{array}{ccc}
\{\O+p\o_F\in (\o_F/p\o_F)^\times~|~N_{F/\mathbb{Q}}(\O)\equiv 1\pmod{p}\}&\longrightarrow&\mathcal{C}(\mathbb{F}_p)\\
x+y\sqrt{d}+p\o_F&\longmapsto&(x,y)
\end{array}
\end{equation*}
is an isomorphism, which completes the proof.
\end{proof}

Now, we further assume that $p\nmid d_1 d_2$.
Then we derive
\begin{equation*}
p\o_K=\left\{
\begin{array}{ll}
\mathfrak{p}_1\mathfrak{p}_2\mathfrak{p}_3\mathfrak{p}_4&\textrm{if $\big(\frac{-d_1}{p}\big)=\big(\frac{-d_2}{p}\big)=1$}\\
\mathfrak{p}_1\mathfrak{p}_2&\textrm{otherwise}
\end{array}\right.
\end{equation*}
where the $\mathfrak{p}_i$ are distinct prime ideals of $K$ (\cite[p.116]{Marcus}).
By making use of (\ref{Galois group}), Lemma \ref{S-order} and \ref{H-order}, one can find the extension degree of $K_{(Np)}$ 
 over $K_{(N)}$.
Here we note that 
\begin{equation*}
\varepsilon_0'+p\o_{K_3}\in\{\O+p\o_{K_3}\in(\o_{K_3}/p\o_{K_3})^\times~|~N_{K_3/\mathbb{Q}}(\O)\equiv 1\pmod{p}\}
\end{equation*}
and so $n_0$ divides $m_{p,3}$ by Lemma \ref{unit norm}.

For $i=1,2$, let
\begin{eqnarray*}
W_{N,p,0}^i&=&\left\{
\begin{array}{ll}
\big\{\O\in S_{N,p,0}~|~ N_{K/K_i}(\O)\equiv  1\pmod{Np\o_{K_i}}  \big\}&\textrm{if $N\neq 1$}\\ 
\big\{\O\in S_{1,p,0}~|~ N_{K/K_i}(\O)\equiv \pm 1\pmod{p\o_{K_i}} \big\}&\textrm{if $N= 1$},
\end{array}\right.\\
\end{eqnarray*}
and let
\begin{eqnarray*}
W_{N,p,0}^{1,2}&=&W_{N,p,0}^1 \cap W_{N,p,0}^2,
\end{eqnarray*}
so that $\ker(\widetilde{\V_{N,p,0}^{1,2}})=W_{N,p,0}^{1,2}/H_{N,p,0}$.
Here, we mean by $\widetilde{W_{N,p,0}^{1,2}}$  the image of $W_{N,p,0}^{1,2}/S_{N,p,1}$ in $(\o_K/p\o_K)^\times$ via the isomorphism (\ref{group isomorphism}).

\begin{theorem}\label{K_{1,2}}
Let $N\neq 2$ be a positive integer and $p$ be an odd prime not dividing $Nd_1d_2$.
Assume that $K_1,K_2\neq \mathbb{Q}(\sqrt{-1}),\mathbb{Q}(\sqrt{-3})$.
Then we deduce
\begin{equation*}
\big[K_{(Np)}:\widetilde{K_{N,p,0}^{1,2}}\big] =\left\{
\begin{array}{ll}
\displaystyle\frac{m_{p,3}}{n_0}& \textrm{if $N\neq 1$}\vspace{0.2cm}\\
\displaystyle\frac{4m_{p,3}}{n_0\cdot Q(K)}& \textrm{if $N=1$ and $n_0$ is even}\vspace{0.2cm}\\
\displaystyle\frac{2m_{p,3}}{n_0\cdot Q(K)}& \textrm{if $N=1$ and $n_0$ is odd},
\end{array}\right.
\end{equation*}
Hence, we get
\begin{equation*}
K_{(Np)}=\widetilde{K_{N,p,0}^{1,2}}\quad\textrm{if and only if}\quad \textrm{$N\neq 1$ and $n_0=m_{p,3}$}.
\end{equation*}
\end{theorem}

\begin{proof}
Let
\begin{eqnarray*}
\widetilde{W_{N,p,0}^{1,2}~'}=
\{\O+p\o_K\in (\o_K/p\o_K)^\times~|~ N_{K/K_i}(\O)\equiv  1\pmod{p\o_{K_i}}~~\textrm{for $i=1,2$}\},
\end{eqnarray*}
and let $d_i^{-1}$ be the inverse of $d_i$ in $(\mathbb{Z}/p\mathbb{Z})^\times$ for $i=1,2$.
For $\O+p\o_K\in\o_K/p\o_K$ we can write $\O=a+b\sqrt{-d_1}+c\sqrt{-d_2}+d\sqrt{d_1d_2}$ 
with $a,b,c,d\in\mathbb{Z}$ due to the fact $\gcd(2,p)=1$.
\par
First, we consider the group $\widetilde{W_{N,p,0}^{1,2}}$.
If $\O+p\o_K\in \widetilde{W_{N,p,0}^{1,2}~'}$, then we attain
\begin{equation}\label{W_1}
\begin{array}{rll}
a^2-b^2d_1+c^2d_2-d^2d_1d_2&\equiv& 1\pmod{p}\\
ab-cdd_2&\equiv& 0\pmod{p},
\end{array}
\end{equation}
and
\begin{equation}\label{W_2}
\begin{array}{rll}
a^2+b^2d_1-c^2d_2-d^2d_1d_2&\equiv&  1\pmod{p}\\
ac-bdd_1&\equiv& 0\pmod{p}.
\end{array}
\end{equation}
Since $ c^2dd_2\equiv abc\equiv b^2dd_1\pmod{p}$, we obtain $d(b^2d_1-c^2d_2)\equiv 0\pmod{p}$.
If $b^2d_1-c^2d_2\equiv 0\pmod{p}$, then $p$ must divide $b,c$ and hence we get $a^2-d^2d_1d_2\equiv 1\pmod{p}$.
Indeed, if $\big(\frac{d_1d_2}{p} \big)=-1$, it is clear.
On the contrary, suppose that $\big(\frac{d_1d_2}{p} \big)=1$ and $b,c\not\equiv 0\pmod{p}$.
Then $d_1^{-1}d_2\equiv D^2 \pmod{p}$ for some $D\in\mathbb{Z}$, which yields $b\equiv \pm cD \pmod{p}$ and 
$a\equiv \pm dDd_1 \pmod{p}$.
Thus we deduce that
\begin{equation*}
a^2-b^2d_1+c^2d_2-d^2d_1d_2 \equiv d^2d_1d_2-c^2d_2+c^2d_2-d^2d_1d_2 \equiv 0\pmod{p},
\end{equation*}
which contradicts (\ref{W_1}).
If $b^2d_1-c^2d_2\not\equiv 0\pmod{p}$ and $d\equiv 0\pmod{p}$, then $ab\equiv ac\equiv 0\pmod{p}$ and so  $a\equiv 0\pmod{p}$.
But it follows from (\ref{W_1}), (\ref{W_2}) that 
$b^2d_1-c^2d_2\equiv  -1\equiv 1\pmod{p}$, which is a contradiction.
Therefore we derive
\begin{equation*}
\begin{array}{ccl}
\widetilde{W_{N,p,0}^{1,2}~'}&=&\big\{a+d\sqrt{d_1d_2}+p\o_K\in(\o_K/p\o_K)^\times ~|~ a^2-d^2d_1d_2\equiv 1~(\bmod{~p})\big\}\vspace{0.1cm}\\
&\cong&\big\{\O+p\o_{K_3}\in(\o_{K_3}/p\o_{K_3})^\times ~|~ N_{K_3/\mathbb{Q}}(\O)\equiv 1~(\bmod{~p})\big\}\vspace{0.1cm}\\
&\cong&\mathbb{Z}/m_{p,3}\mathbb{Z}\quad(\textrm{by Lemma \ref{unit norm}}).
\end{array}
\end{equation*}
If $N\neq 1$, then $\widetilde{W_{N,p,0}^{1,2}}=\widetilde{W_{N,p,0}^{1,2}~'}$ and by Lemma \ref{H-order} we attain
\begin{equation*}
\big[K_{(Np)}:\widetilde{K_{N,p,0}^{1,2}}\big]=|W_{N,p,0}^{1,2}/H_{N,p,0}|=\frac{\big|\widetilde{W_{N,p,0}^{1,2}}\big|}{|H_{N,p,0}/S_{N,p,1}|}=\frac{m_{p,3}}{n_0}.
\end{equation*}
For $i=1,2$, we let 
\begin{equation*}
\begin{array}{lll}
A_i&=&\big\{b\sqrt{-d_1}+c\sqrt{-d_2}+p\o_K\in(\o_K/p\o_K)^\times ~|~ b^2d_1-c^2d_2\equiv (-1)^i~(\bmod{~p})\big\}\vspace{0.1cm}\\
&=&\big\{b\sqrt{-d_1}+c\sqrt{-d_2}+p\o_K\in(\o_K/p\o_K)^\times ~|~ b^2-c^2d_1^{-1}d_2\equiv (-1)^i d_1^{-1}~(\bmod{~p})\big\}\vspace{0.1cm}\\
&\neq&\phi\quad(\textrm{by Lemma \ref{unit norm}}).
\end{array}
\end{equation*}
If $N=1$, we choose $a_1, a_2\in\o_K$ so that 
$a_i+p\o_K\in A_i$ for $i=1,2$.
Then it satisfies
\begin{equation*}
\begin{array}{ll}
N_{K/K_1}(a_{1})\equiv -1\pmod{p},& N_{K/K_2}(a_{1})\equiv 1\pmod{p},\\
N_{K/K_1}(a_{2})\equiv 1\pmod{p},& N_{K/K_2}(a_{2})\equiv -1\pmod{p}.
\end{array}
\end{equation*}
Hence we achieve
\begin{equation*}
\widetilde{W_{1,p,0}^{1,2}}=\bigsqcup_{0\leq i,j<2} a_1^i a_2^j\cdot\widetilde{W_{1,p,0}^{1,2}~'}.
\end{equation*}  
Thus $\big|\widetilde{W_{1,p,0}^{1,2}}\big|=4m_{p,3}$, and so by Lemma \ref{H-order} we derive
\begin{equation*}
\big[K_{(p)}:\widetilde{K_{1,p,0}^{1,2}}\big]=\big|W_{1,p,0}^{1,2}/H_{1,p,0}\big|=\frac{\big|\widetilde{W_{1,p,0}^{1,2}}\big|}{\big|H_{1,p,0}/S_{1,p,1}\big|}=\left\{
\begin{array}{ll}
\displaystyle\frac{4m_{p,3}}{n_0\cdot Q(K)}& \textrm{if $n_0$ is even}\vspace{0.2cm}\\
\displaystyle\frac{2m_{p,3}}{n_0\cdot Q(K)}& \textrm{if $n_0$ is odd}.
\end{array}
\right. 
\end{equation*}
This proves the theorem.
\end{proof}

\begin{example}\label{example1}
Let $K=\mathbb{Q}(\sqrt{-15},\sqrt{-26})$.
Then $K_1=\mathbb{Q}(\sqrt{-15})$, $K_2=\mathbb{Q}(\sqrt{-26})$, 
$K_3=\mathbb{Q}(\sqrt{390})$ and $\varepsilon_0=79+4\sqrt{390}$.
We set $N=5$ and $p=37$ so that $m_0=5$, $\varepsilon_0'=-\varepsilon_0^{m_0}$ and $n_0=38=m_{p,3}$.
Thus by Corollary \ref{primitive generator} and Theorem \ref{K_{1,2}}, for any positive integers $n_1$, $n_2$,
\begin{equation}\label{example 1-1}
K_{(185)}=\widetilde{K_{5,37,0}^{1,2}}=K_{(5)}
\Big(g_{\left[\begin{smallmatrix}0\\1/185\end{smallmatrix}\right]}(\theta_{1})^{2220n_1}
g_{\left[\begin{smallmatrix}0\\1/185\end{smallmatrix}\right]}(\theta_{2})^{2220n_2}
\Big)
\end{equation}
where $\theta_1=({-1+\sqrt{-15}})/{2}$ and $\theta_2=\sqrt{-26}$.

\end{example}

\section{Class fields over imaginary biquadratic fields (II)}\label{(II)}
Following the previous section we shall consider the more general case $\mu>0$.
\par
Let $N$ be a positive integer and $p$ be an odd prime not dividing $N$.
We use the same notations as in Section \ref{(I)}.

\begin{lemma}\label{S-order2}
For a positive integer $\mu$, we have
\begin{equation*}
\big|S_{N,p,\mu}/S_{N,p,\mu+1}\big|=p^4.
\end{equation*}
\end{lemma}
\begin{proof}
Now that
$S_{N,p,\mu}/S_{N,p,\mu+1}$ is isomorphic to $\mathcal{O}_K/p\mathcal{O}_K$ by a mapping 
\begin{eqnarray*}
S_{N,p,\mu}/S_{N,p,\mu+1} \longrightarrow \mathcal{O}_K/p\mathcal{O}_K \textrm{\qquad\qquad\quad ~~~~~}\\
(1+Np^\mu\O)S_{N,p,\mu+1} \longmapsto \O+p\mathcal{O}_K \textrm{~~~ for $\O\in\mathcal{O}_K$},
\end{eqnarray*}
we obtain $S_\mu/S_{\mu+1}\cong (\mathbb{Z}/p\mathbb{Z})^{4}$.
\end{proof}

Let $\varepsilon_0$ be the fundamental unit of the real quadratic field $K_3$, $\ell_0$ be the smallest positive integer such that 
$\varepsilon_0^{\ell_0}\equiv 1\pmod{Np\o_K}$ and $\mu_0$ be the maximal positive integer satisfying $\varepsilon_0^{\ell_0}\equiv 1\pmod{Np^{\mu_0}\o_K}$.
Write 
\begin{equation*}
\varepsilon_0^{\ell_0}=1+Np^{\mu_0}(\alpha_0+\beta_0\sqrt{d_1d_2})
\end{equation*}
with $\alpha_0,\beta_0\in\mathbb{Z}$.
By the maximality of $\mu_0$ we have $\alpha_0+\beta_0\sqrt{d_1d_2}\not\in p\o_{K_3}$.
Since
\begin{equation*}
1=N_{K_3/\mathbb{Q}}(\varepsilon_0^{\ell_0})\equiv 1+2Np^{\mu_0}\alpha_0 \pmod{Np^{\mu_0+1}},
\end{equation*}
we get $p~|~\alpha_0$ and $p\nmid\beta_0$.

\begin{lemma}\label{H-order2}
Assume that $K_1,K_2\neq \mathbb{Q}(\sqrt{-1}), \mathbb{Q}(\sqrt{-3})$.
Then for a positive integer $\mu$ we get
\begin{equation*}
\big|H_{N,p,\mu}/S_{N,p,\mu+1}\big|=\left\{
\begin{array}{ll}
1&\textrm{if $\mu<\mu_0$}\\
p&\textrm{if $\mu\geq \mu_0$}.
\end{array}\right.
\end{equation*}
\end{lemma}
\begin{proof}
In a similar way as in the proof of Lemma \ref{H-order}, one can verify that if $\ell_0$ is odd, then $\varepsilon_0^n\not\equiv -1\pmod{Np\o_K}$ for all $n\in\mathbb{Z}_{>0}$.
And, if $\ell_0$ is even, then we have either $\varepsilon_0^{{\ell_0}/{2}}\equiv -1\pmod{Np\o_K}$ or $\varepsilon_0^n\not\equiv -1\pmod{Np\o_K}$ for all $n\in\mathbb{Z}_{>0}$.
Indeed, suppose $\varepsilon_0^{{\ell_0}/{2}}\not\equiv -1\pmod{Np\o_K}$.
If $\varepsilon_0^{n}\equiv -1\pmod{Np\o_K}$ for some $n\in\mathbb{Z}_{>0}$, then $\ell_0~|~2n$ and so $({\ell_0}/{2})~|~n$.
It contradicts the fact $(\varepsilon_0^{{\ell_0}/{2}})^m\not\equiv -1\pmod{Np\o_K}$ for all $m\in\mathbb{Z}_{>0}$.
\par
Next, we claim that if $\ell_0$ is even and $\varepsilon_0^{{\ell_0}/{2}}\equiv -1\pmod{Np\o_K}$, then $\mu_0$ is the maximal positive integer such that $\varepsilon_0^{{\ell_0}/{2}}\equiv -1\pmod{Np^{\mu_0}\o_K}$.
For, write $\varepsilon_0^{{\ell_0}/{2}}=-1+Np\O$ for some $\O\in\o_{K_3}$. 
Then
\begin{equation*}
\varepsilon_0^{\ell_0}=1+Np\O(-2+Np\O)\equiv 1\pmod{Np^{\mu_0}\o_{K_3}}.
\end{equation*}
Since $-2+Np\O\not\in \mathfrak{p}$ for any prime ideal $\mathfrak{p}$ dividing $p\o_{K_3}$,
$\O$ belongs to $p^{\mu_0-1}\o_{K_3}$ and $\varepsilon_0^{{\ell_0}/{2}}\equiv -1\pmod{Np^{\mu_0}\o_K}$.
If $\varepsilon_0^{{\ell_0}/{2}}\equiv -1\pmod{Np^{\mu}\o_K}$ for some $\mu>\mu_0$, then $\varepsilon_0^{\ell_0}\equiv 1\pmod{Np^{\mu}\o_K}$, which contradicts the maximality of $\mu_0$.
\par
On the other hand, one can show by utilizing the idea in the proof of Lemma \ref{H-order} that $S_{N,p,\mu}\cap\mathcal{O}_K^\times\subset\o_{K_3}^\times=\{\pm\varepsilon_0^n~|~n\in\mathbb{Z} \}$ for all $\mu\in\mathbb{Z}_{>0}$. 
Here we observe that we don't need the assumptions $N\neq 2$.
Since $H_{N,p,\mu}/S_{N,p,\mu+1}\subseteq S_{N,p,\mu}/S_{N,p,\mu+1}\cong(\mathbb{Z}/p\mathbb{Z})^4$, we attain
\begin{equation*}
\dim_{\mathbb{Z}/p\mathbb{Z}}\big(H_{N,p,\mu}/S_{N,p,\mu+1}\big)=0~\textrm{or}~1.
\end{equation*}
It then follows from the above claim that $H_{N,p,\mu}/S_{N,p,\mu+1}=0$ if $\mu<\mu_0$.
Now, assume $\mu\geq\mu_0$.
Observe that $H_{N,p,\mu_0}/S_{N,p,\mu_0+1}=\langle\varepsilon_0^{\ell_0}S_{N,p,\mu_0+1}\rangle\cong\mathbb{Z}/p\mathbb{Z}$ because $\gcd(2,p)=1$.
For a positive integer $m$,
\begin{equation*}
(\varepsilon_0^{\ell_0})^m=1+mNp^{\mu_0}(\alpha_0+\beta_0\sqrt{d_1d_2})+\sum_{i=2}^m
\begin{pmatrix}
m\\
i
\end{pmatrix}
\big\{Np^{\mu_0}(\alpha_0+\beta_0\sqrt{d_1d_2}) \big\}^i.
\end{equation*}
Since $\alpha_0+\beta_0\sqrt{d_1d_2}\not\in p\o_{K_3}$ and 
\begin{equation*}
\sum_{i=2}^p
\begin{pmatrix}
p\\
i
\end{pmatrix}
\big\{Np^{\mu_0}(\alpha_0+\beta_0\sqrt{d_1d_2}) \big\}^i\in Np^{\mu_0+2}\o_{K_3},
\end{equation*}
$m=p$ is the smallest positive integer satisfying $(\varepsilon_0^{\ell_0})^m\in H_{N,p,\mu_0+1}\setminus S_{N,p,\mu_0+2}$.
In a similar fashion, one can prove by using the induction that $m=p^{\mu-\mu_0}$ is the smallest positive integer such that $(\varepsilon_0^{\ell_0})^m\in H_{N,p,\mu}\setminus S_{N,p,\mu+1}$.
Therefore, we conclude
\begin{equation*}
H_{N,p,\mu}/S_{N,p,\mu+1}=\langle\varepsilon_0^{\ell_0 p^{\mu-\mu_0}}S_{N,p,\mu+1} \rangle\cong\mathbb{Z}/p\mathbb{Z}.
\end{equation*}

\end{proof}

\begin{corollary}\label{ray class order2}
With the assumption as in Lemma \ref{H-order2}, for a positive integer $\mu$
\begin{eqnarray*}
\big[K_{(Np^{\mu+1})}:K_{(Np^\mu)}\big]=\left\{
\begin{array}{ll}
p^4 & \textrm{if $\mu<\mu_0$}\\
p^3& \textrm{if $\mu \geq\mu_0$},
\end{array}\right.
\end{eqnarray*}
\end{corollary}
\begin{proof}
It is immediate from (\ref{Galois group}), Lemma \ref{S-order2} and \ref{H-order2}.
\end{proof}

For a positive integer $\mu$ and $i=1,2$, let
\begin{equation*}
W_{N,p,\mu}^{1,2}=\{\O\in S_{N,p,\mu}~|~ N_{K/K_i}(\O)\equiv  1\pmod{Np^{\mu+1}\o_{K_i}}~~\textrm{for $i=1,2$} \}
\end{equation*}
so as to get $\ker(\widetilde{\V_{N,p,\mu}^{1,2}})=W_{N,p,\mu}^{1,2}/H_{N,p,\mu}$.

\begin{theorem}\label{K_{1,2}-2}
Let $N$ be a positive integer and $p$ be an odd prime not dividing $N$.
Assume that $K_1,K_2\neq \mathbb{Q}(\sqrt{-1}),\mathbb{Q}(\sqrt{-3})$.
Then for a positive integer $\mu$, we derive
\begin{equation*}
\big[K_{(Np^{\mu+1})}:\widetilde{K_{N,p,\mu}^{1,2}}\big] =\left\{
\begin{array}{ll}
p& \textrm{if $\mu<\mu_0$}\vspace{0.1cm}\\
1& \textrm{if $\mu\geq\mu_0$},
\end{array}\right.
\end{equation*}
\end{theorem}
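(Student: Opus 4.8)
The plan is to compute the two indices $[K_{(Np^{\mu+1})}:\widetilde{K_{N,p,\mu}}]=|W_{N,p,\mu}/H_{N,p,\mu}|$ and $[K_{(Np^{\mu+1})}:\widetilde{K_{N,p,\mu}^{1,2}}]=|W_{N,p,\mu}^{1,2}/H_{N,p,\mu}|$, which hold by (\ref{Galois group}) and the identifications $\ker(\widetilde{\V_{N,p,\mu}})=W_{N,p,\mu}/H_{N,p,\mu}$ and $\ker(\widetilde{\V_{N,p,\mu}^{1,2}})=W_{N,p,\mu}^{1,2}/H_{N,p,\mu}$. Since $S_{N,p,\mu+1}$ is contained in each of $H_{N,p,\mu}$, $W_{N,p,\mu}^{1,2}$ and $W_{N,p,\mu}$, and $|H_{N,p,\mu}/S_{N,p,\mu+1}|$ equals $1$ for $\mu<\mu_0$ and $p$ for $\mu\geq\mu_0$ by Lemma \ref{H-order2}, the task reduces to computing $|W_{N,p,\mu}^{1,2}/S_{N,p,\mu+1}|$ and $|W_{N,p,\mu}/S_{N,p,\mu+1}|$. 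I would obtain these from the additive isomorphism $\psi\colon S_{N,p,\mu}/S_{N,p,\mu+1}\xrightarrow{\sim}\o_K/p\o_K$, $(1+Np^\mu\alpha)S_{N,p,\mu+1}\mapsto\alpha+p\o_K$ of Lemma \ref{S-order2} and its analogues over $K_1,K_2,K_3$, together with the congruence $N_{K/K_i}(1+Np^\mu\alpha)\equiv 1+Np^\mu\,\mathrm{Tr}_{K/K_i}(\alpha)\pmod{Np^{\mu+1}\o_{K_i}}$ ($\mu\geq1$, $\alpha\in\o_K$), the cross term being swallowed because $2\mu\geq\mu+1$.

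Because $\o_{K_1}^\times=\o_{K_2}^\times=\{\pm1\}$ and $-1\notin S_{N,p,\mu}^{(i)}$ (as $p^\mu\geq3$), one has $H_{N,p,\mu}^{(i)}=S_{N,p,\mu+1}^{(i)}$ for $i=1,2$, so $\psi$ carries $W_{N,p,\mu}^{i}/S_{N,p,\mu+1}$ onto $\ker(\overline{\mathrm{Tr}_{K/K_i}}\colon\o_K/p\o_K\to\o_{K_i}/p\o_{K_i})$ for $i=1,2$, and it carries $W_{N,p,\mu}^{3}/S_{N,p,\mu+1}$ onto $\overline{\mathrm{Tr}_{K/K_3}}^{-1}(V_3)$, where $V_3\subseteq\o_{K_3}/p\o_{K_3}$ is the image of $H_{N,p,\mu}^{(3)}/S_{N,p,\mu+1}^{(3)}$ under the $K_3$-analogue of $\psi$. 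From the discussion around Lemma \ref{H-order2} one reads off $V_3=0$ for $\mu<\mu_0$, whereas for $\mu\geq\mu_0$ one has $H_{N,p,\mu}^{(3)}/S_{N,p,\mu+1}^{(3)}=\langle\varepsilon_0^{\ell_0p^{\mu-\mu_0}}S_{N,p,\mu+1}^{(3)}\rangle$ with $\varepsilon_0^{\ell_0p^{\mu-\mu_0}}\equiv1+Np^\mu(\alpha_0+\beta_0\sqrt{d_1d_2})\pmod{Np^{\mu+1}\o_{K_3}}$, $p\mid\alpha_0$, $p\nmid\beta_0$, hence $V_3=(\mathbb{Z}/p\mathbb{Z})\cdot\overline{\sqrt{d_1d_2}}$.

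Next I would make the trace maps explicit in the integral basis $\{1,\theta_1,\theta_2,\theta_1\theta_2\}$ of $\o_K$ (Lemma \ref{integer-form}), with $\theta_1=\frac{-1+\sqrt{-d_1}}{2}$ and $\theta_2=\sqrt{-d_2}$. Computing the three nontrivial automorphisms of $K/\Q$ gives $\mathrm{Tr}_{K/K_1}(a+b\theta_1+c\theta_2+e\theta_1\theta_2)=2(a+b\theta_1)$, $\mathrm{Tr}_{K/K_2}(\,\cdot\,)=(2a-b)+(2c-e)\theta_2$, and $\mathrm{Tr}_{K/K_3}(\,\cdot\,)=(2a-b)+e\sqrt{d_1d_2}$, where $\sqrt{d_1d_2}:=\theta_2+2\theta_1\theta_2=\sqrt{-d_1}\sqrt{-d_2}\in\o_{K_3}$. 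Reducing modulo the odd prime $p$, a short linear-algebra computation shows $\ker(\overline{\mathrm{Tr}_{K/K_1}})\cap\ker(\overline{\mathrm{Tr}_{K/K_2}})=(\mathbb{Z}/p\mathbb{Z})\cdot\overline{\sqrt{d_1d_2}}$, a line; hence $|W_{N,p,\mu}^{1,2}/S_{N,p,\mu+1}|=p$, and dividing by $|H_{N,p,\mu}/S_{N,p,\mu+1}|$ yields $[K_{(Np^{\mu+1})}:\widetilde{K_{N,p,\mu}^{1,2}}]=p$ for $\mu<\mu_0$ and $1$ for $\mu\geq\mu_0$, which is the ``moreover'' assertion. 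For $W_{N,p,\mu}=W_{N,p,\mu}^{1,2}\cap W_{N,p,\mu}^{3}$ it then remains to decide whether $\overline{\sqrt{d_1d_2}}$, the generator of $W_{N,p,\mu}^{1,2}/S_{N,p,\mu+1}$, lies in $W_{N,p,\mu}^{3}/S_{N,p,\mu+1}$, i.e. whether $\mathrm{Tr}_{K/K_3}(\sqrt{d_1d_2})=2\sqrt{d_1d_2}$ lies in $V_3$. Since $d_1d_2$ is square-free with $d_1d_2\equiv2,3\pmod{4}$, we have $\o_{K_3}=\mathbb{Z}[\sqrt{d_1d_2}]$, so $\sqrt{d_1d_2}\notin p\o_{K_3}$; thus $2\sqrt{d_1d_2}\notin V_3=0$ when $\mu<\mu_0$, while $2\sqrt{d_1d_2}\in V_3=(\mathbb{Z}/p\mathbb{Z})\cdot\overline{\sqrt{d_1d_2}}$ when $\mu\geq\mu_0$. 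Therefore for $\mu<\mu_0$, $W_{N,p,\mu}/S_{N,p,\mu+1}$ is a proper subgroup of the order-$p$ group $W_{N,p,\mu}^{1,2}/S_{N,p,\mu+1}$, so $W_{N,p,\mu}=S_{N,p,\mu+1}=H_{N,p,\mu}$; and for $\mu\geq\mu_0$ the whole line $W_{N,p,\mu}^{1,2}/S_{N,p,\mu+1}$ lies in $W_{N,p,\mu}^{3}/S_{N,p,\mu+1}$, so $W_{N,p,\mu}=W_{N,p,\mu}^{1,2}$ has index $p$ over $S_{N,p,\mu+1}$, and as $H_{N,p,\mu}\subseteq W_{N,p,\mu}$ also has index $p$ there, again $W_{N,p,\mu}=H_{N,p,\mu}$. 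In every case $W_{N,p,\mu}=H_{N,p,\mu}$, giving $[K_{(Np^{\mu+1})}:\widetilde{K_{N,p,\mu}}]=1$, that is, $K_{(Np^{\mu+1})}=\widetilde{K_{N,p,\mu}}$.

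The step I expect to be the main obstacle is not a single deep point but the careful, $\mu$-uniform translation of the multiplicative quotients $W_{N,p,\mu}^{i}/S_{N,p,\mu+1}$ into the additive language of the reduced trace maps on $\o_K/p\o_K$, and in particular reading off $V_3$ correctly from Lemma \ref{H-order2} when $\mu\geq\mu_0$; this, together with the elementary fact $\sqrt{d_1d_2}\notin p\o_{K_3}$, is precisely what produces the dichotomy between $\mu<\mu_0$ and $\mu\geq\mu_0$ in the final count.
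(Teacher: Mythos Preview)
Your proposal is correct and follows essentially the same route as the paper's own proof. The only cosmetic difference is that you phrase the linearization $N_{K/K_i}(1+Np^\mu\alpha)\equiv 1+Np^\mu\,\mathrm{Tr}_{K/K_i}(\alpha)$ explicitly in trace language and work in the integral basis $\{1,\theta_1,\theta_2,\theta_1\theta_2\}$, whereas the paper computes the same norm congruences directly in the $\tfrac12(a+b\sqrt{-d_1}+c\sqrt{-d_2}+d\sqrt{d_1d_2})$ coordinates; both arrive at $W_{N,p,\mu}^{1,2}/S_{N,p,\mu+1}=\langle(1+Np^\mu\sqrt{d_1d_2})S_{N,p,\mu+1}\rangle$ and use the $\alpha_0,\beta_0$ data from Lemma~\ref{H-order2} to finish the $K_3$-case in the same way.
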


\begin{proof}
For any coset $\alpha S_{N,p,\mu+1}$ in $S_{N,p,\mu}/S_{N,p,\mu+1}$ we can choose $\O\in\o_K\cap S_{N,p,\mu}$ such that $\alpha S_{N,p,\mu+1}=\O S_{N,p,\mu+1}$.
And, we write $\O=1+(Np^\mu/2)\big(a+b\sqrt{-d_1}+c\sqrt{-d_2}+d\sqrt{d_1d_2}\big)$ 
with $a,b,c,d\in\mathbb{Z}$ satisfying $a\equiv b\pmod{2}$, $c\equiv d\pmod{2}$.
\par
First, we consider the group $W_{N,p,\mu}^{1,2}/S_{N,p,\mu+1}$.
If $\O\in\o_K\cap W_{N,p,\mu}^{1,2}$ , then we have
\begin{equation*}
\begin{array}{rll}
N_{K/K_1}(\O)&\equiv& 1+Np^\mu(a+b\sqrt{-d_1})\equiv 1 \pmod{Np^{\mu+1}\o_{K_1}},\vspace{0.1cm}\\
N_{K/K_2}(\O)&\equiv& 1+Np^\mu(a+c\sqrt{-d_2})\equiv 1 \pmod{Np^{\mu+1}\o_{K_2}}.
\end{array}
\end{equation*}
Since $a+b\sqrt{-d_1}=a+b+2b\big(\frac{-1+\sqrt{-d_1}}{2}\big)\in p\o_{K_1}$ and $a+c\sqrt{-d_2}\in p\o_{K_2}$, $p$ divides $a,b,c$.
Thus we claim that 
\begin{equation*}
W_{N,p,\mu}^{1,2}/S_{N,p,\mu+1}=\left\langle (1+Np^\mu\sqrt{d_1d_2})S_{N,p,\mu+1}\right\rangle\cong\mathbb{Z}/p\mathbb{Z}, 
\end{equation*}
and by Lemma \ref{H-order2} we get
\begin{equation*}
\big[K_{(Np^{\mu+1})}:\widetilde{K_{N,p,\mu}^{1,2}}\big] =\frac{\big|W_{N,p,\mu}^{1,2}/S_{N,p,\mu+1}\big|}{\big|H_{N,p,\mu}/S_{N,p,\mu+1}\big|}=\left\{
\begin{array}{ll}
p& \textrm{if $\mu<\mu_0$}\vspace{0.1cm}\\
1& \textrm{if $\mu\geq\mu_0$}.
\end{array}\right.
\end{equation*}

\end{proof}

On the other hand, let $\F_{\mu,i}$ and $C_{\mu,i}$ be as in Section \ref{(I)}.
\begin{corollary}\label{level-generator}
With the notations and assumptions as above, if $\mu\geq \mu_0$, then for any positive integers $n_1$, $n_2$,
the value
\begin{equation*}
\prod_{i=1}^2 g_{\F_{\mu+1,i}}(C_{\mu+1,i})^{n_i}
\end{equation*}
generates $K_{(Np^{\mu+1})}$ over $K_{(Np^{\mu_0})}$.
\end{corollary}
\begin{proof}
By Proposition \ref{imaginary generator}, Lemma \ref{widetilde} and Theorem \ref{K_{1,2}-2} we obtain
\begin{equation*}
K_{(Np^{\mu+1})}=K_{(Np^\mu)}(K_1)_{(Np^{\mu+1})}(K_2)_{(Np^{\mu+1})}=K_{(Np^{\mu_0})}(K_1)_{(Np^{\mu+1})}(K_2)_{(Np^{\mu+1})}=K_{(Np^{\mu_0})}\left(\gamma_{1}, \gamma_{2}\right),
\end{equation*}
where $\gamma_{i}=g_{\F_{\mu+1,i}}(C_{\mu+1,i})^{n_i}$ for $i=1,2$.
Since the only element of $\mathrm{Gal}(K_{(Np^{\mu+1})}/K_{(Np^{\mu_0})})$ fixing the value $\gamma_{1}\gamma_{2}$ is the identity, we get the conclusion (\cite[Theorem 3.5 and Remark 3.6]{Jung}).

\end{proof}

\begin{example}\label{example2}
Let $K=\mathbb{Q}\left(\sqrt{-15},\sqrt{-26}\right)$, $N=5$ and $p=37$ as in Example \ref{example1} (ii).
Then $\varepsilon_0=79+4\sqrt{390}$, $\ell_0=190$ and $\mu_0=1$.
Hence by Corollary \ref{level-generator} and (\ref{example 1-1}) we get that for any positive integers $n_1$, $n_2$ and $\mu$,
\begin{equation*}
K_{(5\cdot 37^{\mu})}=
K_{(5)}\Big(
g_{\left[\begin{smallmatrix}0\\1/(5\cdot 37^{\mu})\end{smallmatrix}\right]}(\theta_{1})^{60\cdot 37^{\mu}n_1}
g_{\left[\begin{smallmatrix}0\\1/(5\cdot 37^{\mu})\end{smallmatrix}\right]}(\theta_{2})^{60\cdot 37^{\mu}n_2}
\Big)
\end{equation*}
where $\theta_1=({-1+\sqrt{-15}})/{2}$ and $\theta_2=\sqrt{-26}$.

\end{example}

\bibliographystyle{amsplain}

\begin{thebibliography}{10}

\bibitem{Azizi} A. Azizi, \textit{Construction de la tour des 2-corps de classes de Hilbert de certains corps biquadratiques}, Pacific J. Math. 208 (2003), 1--10. 

\bibitem{Bae} S. Bae and Q. Yue, \textit{Hilbert genus fields of real biquadratic fields},
Ramanujan J. 24 (2011), 161--181. 

\bibitem{Benjamin} E. Benjamin, \textit{On the 2-class field tower of some imaginary biquadratic number fields}, Ramanujan J. 11 (2006), 103--110. 

\bibitem{Buell} D. A. Buell, H. C. Williams and K. S. Williams, \textit{On the imaginary bicyclic biquadratic fields with class-number 2}, Math. Comp.  31  (1977), 1034--1042.




\bibitem{Cohen0} H. Cohen, \textit{Advanced Topics in Computational Number Theory},
Graduate Texts in Mathematics, 193. Springer-Verlag, New York, 2000.





\bibitem{Cox} D. A. Cox, \textit{Primes of the Form $x^2+ny^2$: Fermat, Class Field, and Complex Multiplication},
A Wiley-Interscience Publication, John Wiley \& Sons, Inc., New
York, 1989.





\bibitem{Eum} I. S. Eum, J. K. Koo and D. H. Shin, \textit{Some applications of modular units}, 
Proc. Edinb. Math. Soc. (2) 59 (2016), 91--106.

\bibitem{Hasse} H. Hasse, \textit{Neue bergr\textrm{$\ddot{u}$}ndung der komplexen multiplikation}, Teil I, J. f$\mathrm{\ddot{u}}$r Math. 157 (1927), 115--139, Teil II, J. f$\mathrm{\ddot{u}}$r Math. 165 (1931), 64–-88.






\bibitem{Janusz} G. J. Janusz, \textit{Algebraic Number Fields}, 
2nd ed., Graduate Studies in Mathematics, vol. 7. American Mathematical Society, Providence, 1996.

\bibitem{Jung} H. Y. Jung, J. K. Koo and D. H. Shin, \textit{Ray class invariants over imaginary quadratic fields}, Tohoku Math. J., 63 (2011), 413--426.








\bibitem {Klingen} H. Klingen, \textit{Introductory Lectures on Siegel Modular Forms}, 
Cambridge Studies in Advanced Mathematics, 20, Cambridge University Press, Cambridge, 1990.



\bibitem {Komatsu} K. Komatsu, \textit{Construction of a normal basis by special values of Siegel modular functions}, Proc. Amer. Math. Soc. 128 (2000), 315--323.

\bibitem {K-Y}  J. K. Koo and D. S. Yoon, \textit{Construction of class fields
over cyclotomic fields}, to appear in Kyoto J. Math., http://arxiv.org/abs/1203.4662.



\bibitem {Kubert} D. Kubert and S. Lang, \textit{Modular Units}, Grundlehren der mathematischen Wissenschaften
244, Spinger-Verlag, New York-Berlin, 1981.



\bibitem {Lang} S. Lang, \textit{Elliptic Functions}, 2nd ed., Spinger-Verlag, New York, 1987.


\bibitem {Lemmermeyer} F. Lemmermeyer, \textit{Conics - a poor man's elliptic curves}, 
arXiv:math/0311306, 2003.




\bibitem {Marcus} D. A. Marcus, \textit{Number fields},  Springer-Verlag, New York-Heidelberg, 1977.


\bibitem {Mazur} B. Mazur, \textit{How can we construct abelian Galois extensions of basic number fields\\?},  Bull. Amer. Math. Soc. 48  (2011),  155--209. 









\bibitem{Ramachandra} K. Ramachandra,  \textit{Some applications of Kronecker's limit formulas}, 
Ann. Math. (2) 80 (1964), 104--148. 

\bibitem{Ribet} K. A. Ribet, \textit{A modular construction of unramified $p$-extensions of $\mathbb{Q}(\mu_p)$}, Invent. Math.  34  (1976),  151--162. 





\bibitem {Shimura-1} G. Shimura, \textit{On the class-fields obtained by complex multiplication of abelian varieties}, Osaka Math. J.  14  (1962), 33--44. 




\bibitem {Shimura-4} G. Shimura, \textit{Theta functions with complex multiplication}, Duke Math. J. 43 (1976),  673--696.

\bibitem {Shimura-5} G. Shimura, \textit{Introduction to the Arithmetic Theory of Automorphic Functions}, Princeton University Press, Princeton, NJ, 1994.


\bibitem {Shimura-6} G. Shimura, \textit{Abelian Varieties with Complex Multiplication and Modular Functions}, Princeton University Press, Princeton, NJ, 1998.



\bibitem{Siegel} C. L. Siegel, \textit{Lectures on advanced analytic number theory}, 
Tata Institute of Fundamental Research Lectures on Mathematics, 23, Tata Institute of Fundamental Research, Bombay, 1965. 





\bibitem{Sime} P. J. Sime, \textit{Hilbert class fields of real biquadratic fields},
J. Number Theory 50 (1995), 154--166. 



\bibitem{Takagi} T. Takagi, \textit{{$\ddot{U}ber$} eine Theorie des relativ-Abelschen Zahlk\textrm{$\ddot{o}$}rpers}, 
J. Coll. Sci. Univ. Tokyo 41 (1920), 1--133.






\bibitem{Yokoi} H. Yokoi, \textit{Imaginary bicyclic biquadratic fields with the real quadratic subfield of class-number one}, Nagoya Math. J. 102 (1986), 91--100. 




\end{thebibliography}

\address{% First Author
Ja Kyung Koo\\
Department of Mathematical Sciences \\
KAIST \\
Daejeon 34141 \\
Republic of Korea} {jkkoo@math.kaist.ac.kr}
%%%%%%%%%
\address{% Corresponding Author
Dong Sung Yoon\\
Department of Mathematical Sciences \\
KAIST \\
Daejeon 34141 \\
Republic of Korea} {math\_dsyoon@kaist.ac.kr}

\end{document}